\numberwithin{equation}{section} 
\numberwithin{figure}{section} 
\theoremstyle{plain}
\newtheorem{thm}{Theorem}
  \theoremstyle{plain}
  \newtheorem{lem}[thm]{Lemma}
  \theoremstyle{plain}
  \newtheorem{cor}[thm]{Corollary}
  \theoremstyle{plain}
  \newtheorem{prop}[thm]{Proposition}
\newcommand{\fdim}{\operatorname{fdim}}
\author[A. Guionnet, V. Jones, D. Shlyakhtenko]{A. Guionnet$^*$, V. Jones$^\dagger$ and D. Shlyakhtenko$^\ddagger$}
\thanks{}
\begin{document}

\title{A semi-finite algebra associated to a subfactor planar algebra.}

\thanks{$^{*}$UMPA, ENS Lyon, 46 alée d'Italie, 69364 Lyon Cedex 07, France,
aguionne@umpa.ens-lyon.fr. Research supported by ANR project ANR-08-BLAN-0311-01.
\\
$^{\dagger}$Department of Mathematics, UC Berkeley, Berkeley,
CA 94720, vfr@math.berkeley.edu. Research supported by NSF grant DMS-0856316.\\
$^{\ddagger}$Department of Mathematics, UCLA, Los Angeles, CA
90095 USA, shlyakht@math.ucla.edu. Reserach supported by NSF grants
DMS-0555680, DMS-0900776.}
\begin{abstract}
We canonically associate to any planar algebra two type II$_{\infty}$
factors $\mathfrak{M}_{\pm}$. The subfactors constructed previously
by the authors in \cite{guionnet-jones-shlyakhtenko1} are isomorphic
to compressions of $\mathfrak{M}_{\pm}$ to finite projections. We
show that each $\mathfrak{M}_{\pm}$ is isomorphic to an amalgamated
free product of type I von Neumann algebras with amalgamation over
a fixed discrete type I von Neumann subalgebra. In the finite-depth
case, existing results in the literature imply that $\mathfrak{M}_{+}\cong\mathfrak{M}_{-}$
is the amplification a free group factor on a finite number of generators.
As an application, we show that the factors $M_{j}$ constructed in
\cite{guionnet-jones-shlyakhtenko1} are isomorphic to interpolated
free group factors $L(\mathbb{F}(r_{j}))$, $r_{j}=1+2\delta^{-2j}(\delta-1)I$,
where $\delta^{2}$ is the index of the planar algebra and $I$ is
its global index. Other applications include computations of laws
of Jones-Wenzl projections.
\end{abstract}
\maketitle

\section{Introduction.}

In this paper, we associate a pair of semi-finite von Neumann algebras
$\mathfrak{M}_{\pm}$ to a planar algebra $\mathcal{P}$. The algebras
$\mathfrak{M}_{\pm}$ are obtained via the GNS construction from a
certain non-unital tracial inductive limit algebra $V_{+}$ which
arises canonically from $\mathcal{P}$. These algebras have an interesting
structure, and the paper is mainly devoted to their study.

To state our main application, let $\mathcal{P}$ be a subfactor planar
algebra of index $\delta^{2}$, and let us denote by $M_{k}=M_{k}(\mathcal{P})$
the von Neumann algebra generated in the GNS representation of $(\mathcal{P},\wedge_{k},Tr_{k})$
(see Def. 7 and 8 in \cite{guionnet-jones-shlyakhtenko1}). We prove:
\begin{thm}
\label{thm:ClassofMk}Assume that $\mathcal{P}$ is finite-depth with
global index $I$. Then $M_{k}\cong L(\mathbb{F}(r_{k}))$ with $r_{k}=1+2\delta^{-2k}(\delta-1)I$.
\end{thm}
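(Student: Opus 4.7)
The plan is to combine the structural theorems advertised in the abstract with the standard compression and amplification formulas for interpolated free group factors. As a first step, the paper identifies each $M_k$ with a compression $p_k\, \mathfrak{M}_+\, p_k$ for a distinguished finite projection $p_k$ coming from the box space $\mathcal{P}_{k,+}$; the starting point is therefore to compute the semi-finite trace $\mathrm{Tr}(p_k)$, which I expect to scale like $\delta^{k}$ once the natural normalization of $\mathfrak{M}_+$ is fixed. This first step is essentially a translation of the definitions in \cite{guionnet-jones-shlyakhtenko1} into the semi-finite picture built in this paper.

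Next, by the second main result announced in the abstract, $\mathfrak{M}_+$ decomposes as an amalgamated free product of type I von Neumann algebras over a discrete type I subalgebra $B$. In the finite-depth case, $B$ and the free factors are all direct sums of matrix algebras whose relative traces are encoded by the Perron--Frobenius weights on the principal graph. Existing results in the literature on amalgamated free products over atomic von Neumann algebras (Dykema, R\u{a}dulescu, Shlyakhtenko) then apply: they identify $\mathfrak{M}_+$ with an amplification of a single interpolated free group factor $L(\mathbb{F}_s)$, together with an explicit formula expressing $s-1$ as a weighted sum over the vertices and edges of the principal graph. Using the eigenvector equation $A\mu = \delta \mu$ and the definition $I = \sum_v \mu(v)^2$, this weighted sum should collapse to $2(\delta - 1)I$.

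Finally, applying the compression rule $L(\mathbb{F}_s)^t = L(\mathbb{F}_{\,1 + (s-1)/t^2})$ with $t = \mathrm{Tr}(p_k)$ and substituting the outputs of the two previous steps yields
\[
r_k = 1 + \frac{s-1}{\mathrm{Tr}(p_k)^2} = 1 + \frac{2(\delta-1)I}{\delta^{2k}},
\]
which is the stated formula. The principal obstacle is the bookkeeping inside Step 2: one must correctly tally the positive contributions from the free factors against the negative contribution from the amalgamated subalgebra $B$, check that the principal-graph weights combine via the Perron--Frobenius relation to give exactly $2(\delta-1)I$, and confirm that $\mathrm{Tr}(p_k) = \delta^k$ under the normalization of the semi-finite trace implicit in the construction of $\mathfrak{M}_+$ from $V_+$. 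Once this numerology is pinned down, the theorem follows from a direct application of the amplification formula.
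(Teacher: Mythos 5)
Your plan is essentially the paper's own proof: identify $M_k$ with a compression of $\mathfrak{M}_+$ by a projection of trace $\delta^k$ (normalized so $Tr(e_0)=1$), realize the relevant corner as an amalgamated free product of type I algebras indexed by the edges of the principal graph over the atomic algebra indexed by its vertices, apply Dykema's free-dimension formulas and the Perron--Frobenius eigenvector relation to get $s-1$ proportional to $2(\delta-1)I$, and finish with the compression formula. The only detail you elide is that for odd $k$ the compression $e_k\mathfrak{M}_+e_k$ gives $M_k(\mathcal{P}^{\operatorname{op}})$ rather than $M_k(\mathcal{P})$, which the paper resolves by noting $I^{\operatorname{op}}=I$; with that one line added, your outline matches the argument in Section 4.
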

We refer the reader to \cite{ocneanu:globalIndex,evans-kawahighashi}
for the definition of global index $I$. If $\Gamma$ is the principal
graph of $\mathcal{P}$ and $\mu$ is the Perron-Frobenius eigenvector
normalized by $\mu(*)=1$, then $I=\frac{1}{2}\sum_{v\in\Gamma}\mu(v)^{2}$.
This formula is consistent with the result of Kodiyalam and Sunder
in the depth two case \cite{kodiyalam-sunder:depth2}. 

The main step in proving \ref{thm:ClassofMk} is to prove that the
amplifications of $M_{k}$ are isomorphic to type II$_{\infty}$ von
Neumann algebra $\mathfrak{M}_{+}$ or $\mathfrak{M}_{-}$ (the choice
of sign is according to the parity of $k$). It turns out that each
$\mathfrak{M}_{\pm}$ admits a description as a (possibly infinite)
free product with amalgamation over a discrete type I von Neumann
subalgebra of type I von Neumann algebras. In the finite-depth case,
this is sufficient to determine the isomorphism class of $\mathfrak{M}_{\pm}$
using the work of Dykema \cite{brown-dykema-jung:fdimamalg,dykema:fdim,dykema:interpolated,dykema:finiteAlgAmalg}.

We note that $r_{k}$ the statement of Theorem \ref{thm:ClassofMk}
satisfy $(r_{k}-1)=\delta^{2}(r_{k+1}-1)$. 

We mention also that while this paper was in preparation, Kodiyalam
and Sunder have found a different proof that $M_{k}$ are (in the finite-depth
case) isomorphic to interpolated free group factors \cite{kodiyalam-sunder:interpolated}. 

We conclude the paper with another application of the isomorphism
between $M_{0}$ and a compression of $\mathfrak{M}$. It allows us
to recover the random matrix model used in \cite{guionnet-jones-shlyakhtenko1}
and can be quite useful in random matrix computations (we illustrate
this by describing the joint law of Jones-Wenzl idempotents $JW$).

\section{A semi-finite tracial algebra associated to a planar algebra.}

Let $\mathcal{P}$ be a planar algebra. We denote by $\mathcal{P}_{k}^{\epsilon}$,
$k=0,1,2,\dots$, $\epsilon=\pm$, the $k$-th graded component of
$\mathcal{P}$. 

For fixed $k,\epsilon$ and integers $a,b,p$ satisfying $a+b+p=2k$,
let $V_{a,b}^{\epsilon}(p)$ be a copy of $\mathcal{P}_{k}^{\epsilon}$;
we think of $V_{a,b}^{\epsilon}(p)$ as diagrams arranged to have,
clockwise from the first string, $a$ strings on the left, $p$ on
top, $b$ on the right and so that the top-left corner has shading
$\epsilon$. Define the multiplication map\[
\cdot:V_{a,b}^{\epsilon}(p)\times V_{a',b'}^{\epsilon'}(p')\to V_{a,b'}^{\epsilon}(p+p')\]
 to be zero unless $b=a'$ and $\epsilon'=(-1)^{p}\epsilon$ and otherwise
by the tangle:\def\figureone{\unitlength 2.0 mm\begin{picture}(65,27)(0,37) \linethickness{0.3mm} \put(10,40){\line(0,1){20}} \linethickness{0.3mm} \put(20,40){\line(0,1){10}} \linethickness{0.3mm} \put(10,40){\line(1,0){10}} \linethickness{0.3mm} \put(20,50){\line(1,0){10}} \linethickness{0.3mm} \put(30,50){\line(0,1){10}} \linethickness{0.3mm} \put(10,60){\line(1,0){20}} \linethickness{0.3mm} \put(40,50){\line(0,1){10}} \linethickness{0.3mm} \put(40,50){\line(1,0){10}} \linethickness{0.3mm} \put(50,40){\line(0,1){10}} \linethickness{0.3mm} \put(50,40){\line(1,0){10}} \linethickness{0.3mm} \put(60,40){\line(0,1){20}} \linethickness{0.3mm} \put(40,60){\line(1,0){20}} \linethickness{0.3mm} \put(30,55){\line(1,0){10}} \linethickness{0.3mm} \put(30,57.5){\line(1,0){10}} \linethickness{0.3mm} \put(30,51.25){\line(1,0){10}} \linethickness{0.3mm} \put(12.5,60){\line(0,1){2.5}} \linethickness{0.3mm} \put(15,60){\line(0,1){2.5}} \linethickness{0.3mm} \put(17.5,60){\line(0,1){2.5}} \linethickness{0.3mm} \put(22.5,60){\line(0,1){2.5}} \linethickness{0.3mm}
\put(42.5,60){\line(0,1){2.5}} \put(42.5,60){\line(0,1){2.5}}
\linethickness{0.3mm}
\put(45,60){\line(0,1){2.5}} \put(45,60){\line(0,1){2.5}}
\linethickness{0.3mm}
\put(52.5,60){\line(0,1){2.5}} \put(52.5,60){\line(0,1){2.5}}
\linethickness{0.3mm}
\put(55,60){\line(0,1){2.5}} \put(55,60){\line(0,1){2.5}}
\linethickness{0.3mm} \put(60,57.5){\line(1,0){2.5}}
\put(60,57.5){\line(1,0){2.5}} \linethickness{0.3mm} \put(60,55){\line(1,0){2.5}}
\put(60,55){\line(1,0){2.5}} \linethickness{0.3mm} \put(60,52.5){\line(1,0){2.5}}
\put(60,52.5){\line(1,0){2.5}} \linethickness{0.3mm} \put(60,42.5){\line(1,0){2.5}}
\put(60,42.5){\line(1,0){2.5}} \linethickness{0.3mm} \put(60,45){\line(1,0){2.5}}
\put(60,45){\line(1,0){2.5}} \linethickness{0.3mm} \put(7.5,57.5){\line(1,0){2.5}} \linethickness{0.3mm} \put(7.5,55){\line(1,0){2.5}} \linethickness{0.3mm} \put(7.5,52.5){\line(1,0){2.5}} \linethickness{0.3mm} \put(7.5,50){\line(1,0){2.5}} \linethickness{0.3mm} \put(7.5,42.5){\line(1,0){2.5}} \linethickness{0.3mm} \put(7.5,62.5){\line(1,0){55}} \put(7.5,37.5){\line(0,1){25}} \put(62.5,37.5){\line(0,1){25}} \put(7.5,37.5){\line(1,0){55}} \put(8.75,46.5){\makebox(0,0)[cc]{$\vdots$}}
\put(8.75,45){\makebox(0,0)[cc]{}}
\put(61.25,49){\makebox(0,0)[cc]{$\vdots$}}
\put(65,42.5){\makebox(0,0)[cc]{}}
\put(35,53.5){\makebox(0,0)[cc]{$\vdots$}}
\put(20,61.25){\makebox(0,0)[cc]{$\cdots$}}
\put(48.75,61.25){\makebox(0,0)[cc]{$\cdots$}}
\put(17.5,63.75){\makebox(0,0)[cc]{{$\overbrace{\hbox to 3cm{\hfill}}^{p}$}}}
\put(13.75,63.75){\makebox(0,0)[cc]{}}
\put(17.5,65){\makebox(0,0)[cc]{}}
\put(48.75,64.25){\makebox(0,0)[cc]{$\overbrace{\hbox to 3cm{\hfill}}^{p'}$}}
\put(50,63.75){\makebox(0,0)[cc]{}}
\put(64,48.75){\makebox(0,0)[cc]{$\left.\vbox to 2cm{\vfill}\right\} b'$}}
\put(63.75,48.75){\makebox(0,0)[cc]{}}
\put(63.75,48.75){\makebox(0,0)[cc]{}}
\put(28.5,54.75){\makebox(0,0)[cc]{$b\left\{\vbox to 1cm{\vfill}\right.$}}
\put(28.75,53.75){\makebox(0,0)[cc]{}}
\put(11.5,50.25){\makebox(0,0)[cc]{$\left.\vbox to 1.8cm{\vfill}\right\} a$}}
\put(11.5,39){\makebox(0,0)[cc]{$*$}}
\put(41.75,55){\makebox(0,0)[cc]{$\left.\vbox to 1cm{\vfill}\right\} a'$}}
\put(41.75,49){\makebox(0,0)[cc]{$*$}}
\put(18.75,56.25){\makebox(0,0)[cc]{}}
\put(55,36.25){\makebox(0,0)[cc]{}}
\put(55,35){\makebox(0,0)[cc]{}}
\put(43.75,32.5){\makebox(0,0)[cc]{}}
\put(8.5,36.5){\makebox(0,0)[cc]{$*$}}
\end{picture}}\begin{equation}
\raisebox{-2.5cm}[3cm]{\figureone}\label{eq:multiplication}\end{equation}

The two choices of shading at the top left of the tangle correspond
to the possible values of $\epsilon$.

Define the trace $Tr:V_{a,b}^{\epsilon}(p)\to\mathcal{P}_{0}^{\epsilon}$
to be zero unless $a=b$ and otherwise by the tangle\def\figuretwo{\unitlength 0.7mm \begin{picture}(110,45)(0,05) \linethickness{0.3mm} \put(20,50){\line(1,0){70}} \put(20,20){\line(0,1){30}} \put(90,20){\line(0,1){30}} \put(20,20){\line(1,0){70}} \linethickness{0.3mm} \put(10,40){\line(1,0){10}} \linethickness{0.3mm} \put(10,25){\line(1,0){10}} \linethickness{0.3mm} \put(90,40){\line(1,0){10}} \linethickness{0.3mm} \put(90,25){\line(1,0){10}} \linethickness{0.3mm} \put(35,50){\line(0,1){10}} \linethickness{0.3mm} \put(45,50){\line(0,1){10}} \linethickness{0.3mm} \put(75,50){\line(0,1){10}} \linethickness{0.3mm} \put(25,75){\line(1,0){60}} \put(25,60){\line(0,1){15}} \put(85,60){\line(0,1){15}} \put(25,60){\line(1,0){60}} \put(53.75,66.25){\makebox(0,0)[cc]{$\sum TL$}}
\put(58,55){\makebox(0,0)[cc]{$\cdots$}}
\put(13.75,32.5){\makebox(0,0)[cc]{$\vdots$}}
\put(93.75,32.5){\makebox(0,0)[cc]{$\vdots$}}
\put(95,33.75){\makebox(0,0)[cc]{}}
\linethickness{0.3mm} \qbezier(10,25)(7.39,25.02)(6.19,23.22) \qbezier(6.19,23.22)(4.98,21.41)(5,17.5) \qbezier(5,17.5)(4.92,13.59)(10.94,11.78) \qbezier(10.94,11.78)(16.95,9.98)(30,10) \qbezier(30,10)(42.97,10)(55,10) \qbezier(55,10)(67.03,10)(80,10) \qbezier(80,10)(93.05,9.98)(99.06,11.78) \qbezier(99.06,11.78)(105.08,13.59)(105,17.5) \qbezier(105,17.5)(105.02,21.41)(103.81,23.22) \qbezier(103.81,23.22)(102.61,25.02)(100,25) \linethickness{0.3mm} \qbezier(10,40)(4.78,40.03)(2.38,37.62) \qbezier(2.38,37.62)(-0.03,35.22)(0,30) \qbezier(0,30)(0,24.8)(0,20.59) \qbezier(0,20.59)(0,16.38)(0,12.5) \qbezier(0,12.5)(-0.09,8.59)(6.53,6.78) \qbezier(6.53,6.78)(13.15,4.98)(27.5,5) \qbezier(27.5,5)(41.77,5)(55,5) \qbezier(55,5)(68.23,5)(82.5,5) \qbezier(82.5,5)(96.85,4.98)(103.47,6.78) \qbezier(103.47,6.78)(110.09,8.59)(110,12.5) \qbezier(110,12.5)(110,16.38)(110,20.59) \qbezier(110,20.59)(110,24.8)(110,30) \qbezier(110,30)(110.03,35.22)(107.62,37.62) \qbezier(107.62,37.62)(105.22,40.03)(100,40) 
\put(23,17){\makebox(0,0)[cc]{$*$}}
\end{picture} }

\begin{equation}
\raisebox{-2cm}[2.3cm]\figuretwo\label{eq:trace}\end{equation}
(here $\sum TL$ denotes the sum of all Temperley-Lieb diagrams).
Finally, consider the inclusions\[
V_{a,b}^{\epsilon}(p)\to V_{a+2r,b+2s}^{\epsilon}(p)\]
 given by the tangle\def\figurethree{\unitlength 0.75mm \begin{picture}(55,75)(0,0) \linethickness{0.3mm} \put(10,70){\line(1,0){40}} \linethickness{0.3mm} \put(50,30){\line(0,1){40}} \linethickness{0.3mm} \put(30,30){\line(1,0){20}} \linethickness{0.3mm} \put(30,30){\line(0,1){20}} \linethickness{0.3mm} \put(10,50){\line(1,0){20}} \linethickness{0.3mm} \put(10,50){\line(0,1){20}} \linethickness{0.3mm} \put(50,65){\line(1,0){10}} \linethickness{0.3mm} \put(50,55){\line(1,0){10}} \linethickness{0.3mm} \put(50,35){\line(1,0){10}} \linethickness{0.3mm} \put(0,65){\line(1,0){10}} \linethickness{0.3mm} \put(0,55){\line(1,0){10}} \linethickness{0.3mm} \qbezier(0,30)(5.22,30.02)(7.62,28.81) \qbezier(7.62,28.81)(10.03,27.61)(10,25) \qbezier(10,25)(10.03,22.39)(7.62,21.19) \qbezier(7.62,21.19)(5.22,19.98)(0,20) \linethickness{0.3mm} \qbezier(0,15)(5.22,15.02)(7.62,13.81) \qbezier(7.62,13.81)(10.03,12.61)(10,10) \qbezier(10,10)(10.03,7.39)(7.62,6.19) \qbezier(7.62,6.19)(5.22,4.98)(0,5) \linethickness{0.3mm} \qbezier(0,45)(5.22,45.02)(7.62,43.81) \qbezier(7.62,43.81)(10.03,42.61)(10,40) \qbezier(10,40)(10.03,37.39)(7.62,36.19) \qbezier(7.62,36.19)(5.22,34.98)(0,35) \linethickness{0.3mm} \qbezier(60,15)(54.78,15.02)(52.38,13.81) \qbezier(52.38,13.81)(49.97,12.61)(50,10) \qbezier(50,10)(49.97,7.39)(52.38,6.19) \qbezier(52.38,6.19)(54.78,4.98)(60,5) \linethickness{0.3mm} \put(0,75){\line(1,0){60}} \put(0,0){\line(0,1){75}} \put(60,0){\line(0,1){75}} \put(0,0){\line(1,0){60}} \linethickness{0.3mm} \put(15,70){\line(0,1){5}} \linethickness{0.3mm} \put(20,70){\line(1,0){25}} \linethickness{0.3mm} \put(20,70){\line(0,1){5}} \linethickness{0.3mm} \put(45,70){\line(0,1){5}} \put(30,72.5){\makebox(0,0)[cc]{$\cdots$}}
\put(55,46){\makebox(0,0)[cc]{$\vdots$}}
\put(5,61){\makebox(0,0)[cc]{$\vdots$}}
\put(-5,25.5){\makebox(0,0)[cc]{$2r\left\{\vbox to1.7cm{\vfill}\right.$}}
\put(25.75,2.5){\makebox(0,0)[cc]{*}}
\put(25.75,45){\makebox(0,0)[cc]{*}}
\put(64.75,10){\makebox(0,0)[cc]{$\Bigg\}2s$}}

\end{picture}}\begin{equation}
\delta^{-(r+s)/2}\ \ \qquad\raisebox{-2.5cm}[3cm]{\figurethree}\label{eq:inclusions}\end{equation}

Let\[
V_{\epsilon_{2}}^{\epsilon_{1}}(p)=\bigcup_{{a,b\geq0\atop \epsilon_{2}=(-1)^{a}\epsilon_{1}}}V_{a,b}^{\epsilon_{1}}(p)\]
and\[
V_{\epsilon_{2}}^{\epsilon_{1}}=\bigoplus_{p\geq0}V_{\epsilon_{2}}^{\epsilon_{1}}(p).\]
Finally, we let\[
V_{+}=V_{+}^{+}\oplus V_{+}^{-}.\]

One easily checks that the inclusions \eqref{eq:inclusions} are compatible
with the multiplication \eqref{eq:multiplication} and the trace \eqref{eq:trace},
thus proving the following:
\begin{lem}
\label{lem:defofA}(a) Equation \eqref{eq:multiplication} determines
an associative multiplication $\cdot$ on $V_{+}=V_{+}^{+}+V_{+}^{-}$
so that \[
V_{\epsilon}^{\epsilon'}(p)\cdot V_{\epsilon}^{\epsilon''}(p')\subset V_{\epsilon}^{\epsilon'}(p+p').\]
Furthermore, if $x\in V_{+}^{\epsilon}(0)$ and $y\in V_{+}^{\epsilon'}$
and $\epsilon\neq\epsilon'$ , then $x\cdot y=0$.\\
(b) Equation \eqref{eq:trace} defines a trace on $V_{+}$.\\
(c) The linear spaces $A_{+}^{\epsilon}\stackrel{\operatorname{def}}{=}V_{+}^{\epsilon}(0)$
form subalgebras of $V_{+}$. Moreover, $A_{+}=A_{+}^{+}+A_{+}^{-}$
is isomorphic to $A_{+}^{+}\oplus A_{+}^{-}$ as algebras.
\end{lem}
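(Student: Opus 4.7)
The plan is to reduce every assertion to a planar-isotopy identity between tangles, with the normalization factor $\delta^{-(r+s)/2}$ in \eqref{eq:inclusions} precisely calibrated to cancel the scalar contributions of closed loops that arise when inclusions meet the multiplication or trace. Part (c) will then follow painlessly from part (a), so the real content is in (a) and (b).

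For (a), the crucial point is compatibility of \eqref{eq:multiplication} with the inclusions. Suppose $x\in V_{a,b}^\epsilon(p)$ is included into $V_{a+2r,b+2s}^\epsilon(p)$ by attaching $r$ nested left-caps and $s$ nested right-caps, and $y\in V_{a',b'}^{\epsilon'}(p')$ with $a'=b$ is included into $V_{a'+2s,b'+2t}^{\epsilon'}(p')$; the shift by $2s$ on the left of $y$ is forced in order to preserve the matching condition. Stacking the two included tangles via \eqref{eq:multiplication} glues the $s$ right-caps of $x$ to the $s$ left-caps of $y$ into $s$ nested closed loops contributing a factor $\delta^s$, which combines with the two inclusion normalizations $\delta^{-(r+s)/2}\cdot\delta^{-(s+t)/2}=\delta^{-(r+t)/2}\delta^{-s}$ to reproduce exactly the inclusion of $x\cdot y$ into $V_{a+2r,b'+2t}^\epsilon(p+p')$. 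Hence $\cdot$ descends to $V_+$. The grading $V_\epsilon^{\epsilon'}(p)\cdot V_\epsilon^{\epsilon''}(p')\subset V_\epsilon^{\epsilon'}(p+p')$ is manifest from the tangle (the top-left shading of the first factor survives as the top-left shading of the product, and the numbers of top strings add), and the matching rule $\epsilon''=(-1)^p\epsilon'$ composes correctly under triple stacking, so associativity is the planar isotopy of three rectangles placed side by side. The vanishing $x\cdot y=0$ for $x\in V_+^\epsilon(0)$ and $y\in V_+^{\epsilon'}$ with $\epsilon\neq\epsilon'$ is immediate: since $p=0$, the matching condition of \eqref{eq:multiplication} requires $\epsilon'=\epsilon$, which fails.

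For (b), an analogous bookkeeping shows that \eqref{eq:trace} is invariant under \eqref{eq:inclusions}: the caps added by the inclusion meet the trace closure to form nested closed loops whose $\delta$-contribution is cancelled by the $\delta^{-(r+s)/2}$ factor (applied with $r=s$, since the trace forces the result to have equal numbers of strings on the two sides), while the sum $\sum TL$ reorganizes consistently on the larger annulus. Cyclicity $Tr(xy)=Tr(yx)$ then follows because the annular closure of $x\cdot y$ is planar-isotopic, via a rotation that swaps the two factors, to the closure of $y\cdot x$, and $\sum TL$ is invariant under this rotation. Part (c) is a direct corollary of (a): specializing to $p=p'=0$ gives $A_+^\epsilon\cdot A_+^\epsilon\subset A_+^\epsilon$, so each $A_+^\epsilon$ is a subalgebra, and the vanishing clause of (a) (applied in both orders, since $p=0$) yields $A_+^+\cdot A_+^-=A_+^-\cdot A_+^+=0$, so $A_+\cong A_+^+\oplus A_+^-$ as algebras.

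The main obstacle is precisely the scalar bookkeeping in the compatibility checks for $\cdot$ and $Tr$ with \eqref{eq:inclusions}: one must count exactly how many closed loops are created when nested caps meet, and confirm that the resulting power of $\delta$ is cancelled by the inclusion normalizations. Once these $\delta$-factors are matched, associativity, cyclicity, and the subalgebra decomposition all reduce to routine planar isotopies.
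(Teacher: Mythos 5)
Your proposal is correct and is essentially the paper's own argument: the paper disposes of this lemma with the single remark that ``one easily checks'' compatibility of \eqref{eq:inclusions} with \eqref{eq:multiplication} and \eqref{eq:trace}, and your loop count --- $s$ closed circles contributing $\delta^{s}$ against the normalizations $\delta^{-(r+s)/2}\delta^{-(s+t)/2}$, and likewise the $r=s$ case for the trace --- is exactly that check, with the remaining points (associativity, cyclicity via rotation-invariance of $\sum TL$, the shading condition forcing $x\cdot y=0$ when $p=0$ and $\epsilon\neq\epsilon'$) reduced to planar isotopy as intended.
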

We could also define $V_{-}=V_{-}^{-}\oplus V_{-}^{+}$; this is also
an algebra with a trace, in the analogous way. 
\begin{lem}
\label{lem:propertiesofA}(a) The tangle\def\figurefour{\unitlength 0.75mm \begin{picture}(60,35)(0,35) \linethickness{0.3mm} \put(10,60){\line(1,0){40}} \put(10,45){\line(0,1){15}} \put(50,45){\line(0,1){15}} \put(10,45){\line(1,0){40}} \linethickness{0.3mm} \put(0,65){\line(1,0){60}} \linethickness{0.3mm} \put(50,57.5){\line(1,0){10}} \linethickness{0.3mm} \put(50,47.5){\line(1,0){10}} \linethickness{0.3mm} \put(0,57.5){\line(1,0){10}} \linethickness{0.3mm} \put(0,47.5){\line(1,0){10}} \put(55,54){\makebox(0,0)[cc]{$\vdots$}}
\put(5,54){\makebox(0,0)[cc]{$\vdots$}}
\linethickness{0.3mm} \put(0,70){\line(1,0){60}} \put(0,40){\line(0,1){30}} \put(60,40){\line(0,1){30}} \put(0,40){\line(1,0){60}} \put(10,42.5){\makebox(0,0)[cc]{$*$}}
\put(5,37.5){\makebox(0,0)[cc]{$*$}}
\put(5,37.5){\makebox(0,0)[cc]{}}
\end{picture} }\begin{equation}
\raisebox{-1.2cm}[1.8cm]\figurefour\label{eq:inclusion}\end{equation}
defines an injection $i$ from $A_{+}^{+}$ to $A_{+}^{-}$. \\
(b) The tangle \def\figurefive{\unitlength 0.75mm \begin{picture}(60,35)(0,35) \linethickness{0.3mm} \put(10,62.5){\line(1,0){40}} \put(10,45){\line(0,1){17.5}} \put(50,45){\line(0,1){17.5}} \put(10,45){\line(1,0){40}} \linethickness{0.3mm} \put(50,57.5){\line(1,0){10}} \linethickness{0.3mm} \put(50,47.5){\line(1,0){10}} \linethickness{0.3mm} \put(0,57.5){\line(1,0){10}} \linethickness{0.3mm} \put(0,47.5){\line(1,0){10}} \put(55,52.5){\makebox(0,0)[cc]{$\cdots$}}
\put(5,52.5){\makebox(0,0)[cc]{$\cdots$}}
\linethickness{0.3mm} \put(0,70){\line(1,0){60}} \put(0,40){\line(0,1){30}} \put(60,40){\line(0,1){30}} \put(0,40){\line(1,0){60}} \put(10,42.5){\makebox(0,0)[cc]{$*$}}
\put(5,37.5){\makebox(0,0)[cc]{$*$}}
\put(5,37.5){\makebox(0,0)[cc]{}}
\linethickness{0.3mm} \qbezier(50,60)(52.61,59.99)(53.81,60.59) \qbezier(53.81,60.59)(55.02,61.2)(55,62.5) \qbezier(55,62.5)(55.08,63.8)(49.06,64.41) \qbezier(49.06,64.41)(43.05,65.01)(30,65) \qbezier(30,65)(16.95,65.01)(10.94,64.41) \qbezier(10.94,64.41)(4.92,63.8)(5,62.5) \qbezier(5,62.5)(4.98,61.2)(6.19,60.59) \qbezier(6.19,60.59)(7.39,59.99)(10,60) \end{picture}}\begin{equation}
\raisebox{-1.2cm}[1.8cm]\figurefive\label{eq:E1}\end{equation}
defines a completely-positive map $E_{1}:A_{+}^{-}\to A_{+}^{+}$.\\
(c) Let $\eta:A_{+}\to A_{+}$ be given by\[
\eta(a\oplus b)=E_{1}(b)\oplus i(a),\qquad a\in A_{+}^{+},b\in A_{+}^{-}.\]
Then $Tr(x\eta(y))=Tr(\eta(x)y)$ for all $x,y\in A_{+}$.\\
(d) The inclusion of $V_{a,a}^{\epsilon}(0)$ into $V_{a+2,a+2}^{\epsilon}(0)$
determined by setting $r=s=1$ in \eqref{eq:inclusions} is the same
as the inclusion $\alpha$ described in Lemma~2.1 in \cite{shlyakht-popa:universal}.
In particular, the algebra $A_{+}^{+}$ (resp., $A_{+}^{-}$) is exactly
(the algebraic inductive limit of $\mathcal{P}_{k}$'s inside) the
type I von Neumann algebra $\mathcal{A}_{-1}^{-1}$ (resp., $\mathcal{A}_{0}^{-1}$)
defined in (2.4.7) in \cite{shlyakht-popa:universal}, and this identification
is trace-preserving.\\
(e) The trace $Tr$ is non-negative definite on $A_{+}$, and moreover
there exists a type I semi-finite von Neumann algebra $\mathfrak{A}_{+}$
with trace $Tr$ containing $A_{+}$ as a weakly dense subalgebra
in a trace-preserving way. The minimal projections of $\mathfrak{A}_{+}$
are contained in $A_{+}$$ $.\\
(f) The minimal central projections of $A_{+}$ are labeled by
the graph $\Gamma$. The trace of a minimal projection of $A_{+}$
lying in the central component associated to the vertex $v$ is the
value of the Perron-Frobenius eigenvector $\mu(v)$, normalized by
$\mu(*)=1$.\\
(g) If $\mathfrak{A}_{+}^{\epsilon}$ is the closure of $A_{+}^{\epsilon}$
in $\mathfrak{A}_{+}$, then $\mathfrak{A}_{+}=\mathfrak{A}_{+}^{+}\oplus\mathfrak{A}_{+}^{-}$.
Moreover, $\mathfrak{A}_{+}^{\epsilon}=P_{\epsilon}\mathfrak{A}_{+}$,
where $P_{\epsilon}$ is the central projection corresponding to all
even (if $\epsilon=+$) or odd (if $\epsilon=-$) vertices in the
principal graph $\Gamma$.\\
(h) The inclusion $\mathfrak{A}_{+}^{+}\subset\mathfrak{A_{+}^{-}}$
is given by the graph $\Gamma$.\end{lem}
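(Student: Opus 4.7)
The strategy splits into three blocks: items (a)--(c) are verified by direct manipulation of tangles; (d) is the comparison that identifies our construction with the algebra of \cite{shlyakht-popa:universal}; and (e)--(h) are then read off from the known structure theory of that algebra.

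For (a), I would check that the tangle \eqref{eq:inclusion} intertwines the multiplication \eqref{eq:multiplication}: stacking two copies of \eqref{eq:inclusion} and then applying \eqref{eq:multiplication} yields, after planar isotopy, the same tangle as applying \eqref{eq:inclusion} to the product $x\cdot y$. Injectivity follows by exhibiting a diagrammatic left inverse obtained by capping off the newly-added strings, which undoes \eqref{eq:inclusion} up to a power of $\delta$. For (b), the map defined by \eqref{eq:E1} is a diagrammatic conditional expectation of $A_+^-$ onto $A_+^+$; to see complete positivity, I would write $E_1(x^*x)$ diagrammatically and recognize it as the composition of a $*$-homomorphism into a larger planar algebra followed by a positive map, a Stinespring-type argument that amplifies verbatim to matrices over $A_+^-$. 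Part (c) is pure diagrammatics: using Lemma~\ref{lem:defofA}(a) to discard mixed-parity terms, one reduces to the identities $Tr(x_+E_1(y_-))=Tr(i(x_+)y_-)$ and $Tr(x_-i(y_+))=Tr(E_1(x_-)y_+)$, each of which expresses the trace-pairing duality between $i$ and $E_1$ and becomes visible after stacking \eqref{eq:inclusion} or \eqref{eq:E1} with \eqref{eq:trace} and applying isotopy.

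Part (d) is the key technical step and the place where I anticipate the most bookkeeping. The inclusion $\alpha$ of \cite[Lemma~2.1]{shlyakht-popa:universal} is defined by the same tangle as \eqref{eq:inclusions} with $r=s=1$ and the same normalizing factor $\delta^{-1}$; reconciling shading conventions and the choice of base point $*$ across the two papers then yields the identification of $A_+^+$ (resp.\ $A_+^-$) with the algebraic version of $\mathcal{A}_{-1}^{-1}$ (resp.\ $\mathcal{A}_0^{-1}$). The delicate part is tracking shading parity and ensuring that the trace-preserving nature of the identification survives.

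Once (d) is in hand, the remaining items follow from the structure theory in \cite{shlyakht-popa:universal}. Item (e) is immediate because the ambient algebra in \cite{shlyakht-popa:universal} is already a type I semi-finite von Neumann algebra carrying a faithful normal semi-finite trace, so positivity of $Tr$ on $A_+$ is inherited and the GNS/weak closure yields $\mathfrak{A}_+$; the assertion about minimal projections reflects the fact that each irreducible central summand of $\mathcal{P}_k$ already contains the corresponding rank-one projection. For (f), the Bratteli diagram of the tower $\mathcal{P}_k \subset \mathcal{P}_{k+1}$ obtained from \eqref{eq:inclusions} with $r=s=1$ is (two copies of) the principal graph $\Gamma$, and the trace of a minimal projection in the $v$-component equals the Perron--Frobenius weight $\mu(v)$ by the standard computation relating the Markov trace to the principal eigenvector. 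Item (g) is the shading-parity decomposition of $V_+$, preserved under weak closure, and (h) is then an immediate consequence of (f) together with (a): the inclusion $i$ sends a minimal projection at an even vertex of $\Gamma$ into a direct sum of minimal projections at the adjacent odd vertices, which is precisely the inclusion described by $\Gamma$.
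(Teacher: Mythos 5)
Your proposal follows essentially the same route as the paper: (a)--(c) are verified by direct tangle manipulation (the paper dismisses these, and (d), as ``straightforward''), (d) is the pivotal identification with the construction of \cite{shlyakht-popa:universal}, and (e)--(h) are then read off from the structure theory developed there (the paper cites Lemma~2.6 of \cite{shlyakht-popa:universal} and takes $\mathfrak{A}_{+}=\mathcal{A}_{-1}^{-1}\oplus\mathcal{A}_{0}^{-1}$). The extra detail you supply --- the left inverse by capping for injectivity of $i$, the Stinespring-style argument for complete positivity of $E_1$, and the trace-pairing duality for (c) --- is consistent with what the paper leaves implicit.
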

\begin{proof}
Parts (a), (b) and (c), (d) are straightforward. Both (e) and (f)
follow from (d); indeed one takes $\mathfrak{A}_{+}=\mathcal{A}_{-1}^{-1}\oplus\mathcal{A}_{0}^{-1}$.
The remaining parts (f), (g) and (h) follow from \cite{shlyakht-popa:universal},
Lemma 2.6.
\end{proof}
One could instead work with $A_{-}$; in this case a similar lemma
holds, with the exception of replacing $\Gamma$ by the dual principal
graph.
\begin{thm}
\label{thm:compression}Let $e_{n}\in A_{+}^{(-1)^{n}}$ denote the
projection \def\figuresix{\unitlength 0.5mm \begin{picture}(63.75,75)(0,0) \linethickness{0.3mm} \put(10,70){\line(1,0){50}} \linethickness{0.3mm} \put(10,65){\line(1,0){50}} \linethickness{0.3mm} \put(10,55){\line(1,0){50}} \linethickness{0.3mm} \qbezier(10,45)(15.22,45.02)(17.62,43.81) \qbezier(17.62,43.81)(20.03,42.61)(20,40) \qbezier(20,40)(20.03,37.39)(17.62,36.19) \qbezier(17.62,36.19)(15.22,34.98)(10,35) \linethickness{0.3mm} \qbezier(10,30)(15.22,30.02)(17.62,28.81) \qbezier(17.62,28.81)(20.03,27.61)(20,25) \qbezier(20,25)(20.03,22.39)(17.62,21.19) \qbezier(17.62,21.19)(15.22,19.98)(10,20) \linethickness{0.3mm} \qbezier(10,15)(15.22,15.02)(17.62,13.81) \qbezier(17.62,13.81)(20.03,12.61)(20,10) \qbezier(20,10)(20.03,7.39)(17.62,6.19) \qbezier(17.62,6.19)(15.22,4.98)(10,5) \linethickness{0.3mm} \qbezier(60,45)(54.78,45.02)(52.38,43.81) \qbezier(52.38,43.81)(49.97,42.61)(50,40) \qbezier(50,40)(49.97,37.39)(52.38,36.19) \qbezier(52.38,36.19)(54.78,34.98)(60,35) \linethickness{0.3mm} \qbezier(60,30)(54.78,30.02)(52.38,28.81) \qbezier(52.38,28.81)(49.97,27.61)(50,25) \qbezier(50,25)(49.97,22.39)(52.38,21.19) \qbezier(52.38,21.19)(54.78,19.98)(60,20) \linethickness{0.3mm} \qbezier(60,15)(54.78,15.02)(52.38,13.81) \qbezier(52.38,13.81)(49.97,12.61)(50,10) \qbezier(50,10)(49.97,7.39)(52.38,6.19) \qbezier(52.38,6.19)(54.78,4.98)(60,5) \linethickness{0.3mm} \put(10,75){\line(1,0){50}} \put(10,0){\line(0,1){75}} \put(60,0){\line(0,1){75}} \put(10,0){\line(1,0){50}} \put(35,62){\makebox(0,0)[cc]{$\vdots$}}
\put(66,62){\makebox(0,0)[cc]{$\bigg\}n$}}
\end{picture}}\begin{equation}
\raisebox{-1.3cm}[2.6cm]\figuresix\label{eq:en}\end{equation}
Then there are a trace-preserving isomorphisms:\begin{eqnarray*}
(e_{n}V_{+}e_{n},\cdot,\delta^{-n}Tr) & \cong & \begin{cases}
(\mathcal{P},\wedge_{n},Tr_{n}), & n\textrm{ even},\\
(\mathcal{P}^{\operatorname{op}},\wedge_{n},Tr_{n}), & n\textrm{ odd};\end{cases}\\
(e_{n}V_{-}e_{n},\cdot,\delta^{-n}Tr) & \cong & \begin{cases}
(\mathcal{P}^{\operatorname{op}},\wedge_{n},Tr_{n}), & n\textrm{ even},\\
(\mathcal{P},\wedge_{n},Tr_{n}), & n\textrm{ odd};\end{cases}\end{eqnarray*}
where we write $\mathcal{P}^{\operatorname{op}}$ for the dual planar
algebra to $\mathcal{P}$ (i.e., one for which the shadings are reversed).\end{thm}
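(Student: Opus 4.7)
The plan is to identify, by explicit tangle manipulation, each compression $(e_{n}V_{\pm}e_{n},\cdot,\delta^{-n}Tr)$ with the planar algebra $\mathcal{P}$ (or its dual $\mathcal{P}^{\mathrm{op}}$) equipped with the trace-algebra structure $(\wedge_{n},Tr_{n})$ of \cite{guionnet-jones-shlyakhtenko1}. I read $e_{n}$ as the identity diagram in $V_{n,n}^{(-1)^{n}}(0)$, viewed inside $A_{+}^{(-1)^{n}}$ via the inductive system \eqref{eq:inclusions}; it is a projection because stacking two identity diagrams via \eqref{eq:multiplication} produces no new closed loops.

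\emph{Underlying vector space.} By the multiplication rule \eqref{eq:multiplication}, the product $e_{n}\cdot y\cdot e_{n}$ vanishes unless $y\in V_{n,n}^{(-1)^{n}}(p)$ for some $p$, and for such $y$ the identity-nature of $e_{n}$ yields $e_{n}ye_{n}=y$. The constraint $a+b+p=2k$ with $a=b=n$ then forces $p$ to be even and $k=n+p/2$, giving a linear isomorphism
\[
e_{n}V_{+}e_{n}\;\cong\;\bigoplus_{k\geq n}V_{n,n}^{(-1)^{n}}(2(k-n))\;\cong\;\bigoplus_{k\geq n}\mathcal{P}_{k}^{(-1)^{n}},
\]
which is the underlying vector space of $\mathcal{P}$ when $n$ is even (top-left shading $+$) and of $\mathcal{P}^{\mathrm{op}}$ when $n$ is odd.

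\emph{Multiplication and trace.} On $V_{n,n}^{(-1)^{n}}(p_{1})\times V_{n,n}^{(-1)^{n}}(p_{2})\to V_{n,n}^{(-1)^{n}}(p_{1}+p_{2})$, the tangle \eqref{eq:multiplication} glues the $n$ right-strings of the first diagram to the $n$ left-strings of the second; under the identification with $\mathcal{P}_{k_{i}}$ this is exactly the $\wedge_{n}$-product that joins two $k$-boxes along $n$ matched strings. The trace \eqref{eq:trace} closes the $n$ left-strings of $y$ to the $n$ right-strings by the nested outer arcs (creating $n$ closed loops worth $\delta^{n}$ when $y$ is the identity) and caps the $p$ top-strings by $\sum TL$; reorganized as a single planar closure of an $(n+p/2)$-box, this reads as $\delta^{n}\,Tr_{n+p/2}$, so the prefactor $\delta^{-n}$ precisely rescales $Tr$ to match $Tr_{n}$. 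The $V_{-}$ case is settled by the identical argument with the opposite shading convention, which exchanges $\mathcal{P}$ with $\mathcal{P}^{\mathrm{op}}$ and reverses the parity assignment in $n$. The main obstacle is this trace comparison: one must verify that the $\sum TL$ top-cap combined with the outer left-right closure agrees, up to the factor $\delta^{n}$, with the planar-algebra trace $Tr_{n+p/2}$ from \cite{guionnet-jones-shlyakhtenko1}. This is essentially a tangle-convention check, but it is the nontrivial normalization on which the whole isomorphism rests.
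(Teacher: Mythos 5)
Your argument is essentially the paper's own (one-sentence) proof --- identify $x\in\mathcal{P}_{n+p/2}^{\epsilon}$ with an element of $V_{n,n}^{\epsilon}(p)$, observe that compression by $e_{n}$ picks out exactly these components, and check that \eqref{eq:multiplication} restricts to $\wedge_{n}$ and \eqref{eq:trace} to $\delta^{n}Tr_{n}$ --- just spelled out in more detail, including the normalization check the paper leaves implicit. The only slip is notational: the tangle trace of $y\in V_{n,n}^{(-1)^{n}}(p)$ should be written $\delta^{n}\,Tr_{n}(x)$, with $Tr_{n}$ the trace of $(\mathcal{P},\wedge_{n})$ evaluated on $x\in\mathcal{P}_{n+p/2}$ sitting in graded degree $n+p/2$ of $Gr_{n}(\mathcal{P})$, not ``$\delta^{n}Tr_{n+p/2}$''.
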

\begin{proof}
The isomorphism is given by identifying an element $x\in\mathcal{P}_{2n+p}^{\epsilon}$
with an element of $V_{n,n}^{\epsilon}(p)$, and then identifying
$V_{n,n}^{\epsilon}(p)$ with $e_{n}V_{(-1)^{n}\epsilon}^{\epsilon}e_{n}$.
\end{proof}

\section{Operator-valued semicircular systems.}

Let $A$ be a von Neumann algebra and let $\eta:A\to A$ be a completely-positive
map. Then \cite{shlyakht:semicirc,speicher:thesis} there is a unique
von Neumann algebra $M\supset A$, a conditional expectation $E:M\to A$
and an element $X=X^{*}\in M$ so that: (i) $M=W^{*}(A,X)$; (ii)
$E(a_{0}Xa_{1})=0$ for any $a_{j}\in A$ and $E$ satisfies the following
recursive property (here $a_{1},a_{2},\dots\in A$):\begin{equation}
E(a_{0}Xa_{1}\cdots Xa_{n})=\sum_{k=2}^{n}a_{0}\eta(E(a_{1}Xa_{2}\cdots Xa_{k-1}))\ E(a_{k}Xa_{k+1}\cdots a_{n-1}Xa_{n}).\label{eq:recursiveDefOfX}\end{equation}
For example,\[
E(a_{0}Xa_{1})=0,\qquad E(a_{0}Xa_{1}Xa_{2})=a_{0}\eta(a_{1})a_{2}.\]
The element $X$ is called the $A$-valued semicircular element with
variance $\eta$. Note that $\eta$ is determined by $\eta(a)=E(XaX)$.

The definition still makes sense if $A$ is a self-adjoint subalgebra
of a von Neumann algebra, provided that $\eta$ extends to a completely-positive
map on (some) the von Neumann algebra containing $A$. The $A$-valued
distribution of $X$ is completely described by the recursive formula
\eqref{eq:recursiveDefOfX}.

It is not hard to see that is equivalent to the following graphical
rule of computing $E(a_{0}Xa_{1}\cdots Xa_{n})$. First draw the product
as follows:\def\myfigureX{\unitlength 0.9 mm \begin{picture}(160,85)(0,70) \linethickness{0.3mm} \put(0,80){\line(1,0){10}} \put(0,70){\line(0,1){10}} \put(10,70){\line(0,1){10}} \put(0,70){\line(1,0){10}} \linethickness{0.3mm} \put(25,75){\circle{10}}
\linethickness{0.3mm} \put(65,75){\circle{10}}
\linethickness{0.3mm} \put(40,80){\line(1,0){10}} \put(40,70){\line(0,1){10}} \put(50,70){\line(0,1){10}} \put(40,70){\line(1,0){10}} \linethickness{0.3mm} \put(80,80){\line(1,0){10}} \put(80,70){\line(0,1){10}} \put(90,70){\line(0,1){10}} \put(80,70){\line(1,0){10}} \linethickness{0.3mm} \put(110,80){\line(1,0){10}} \put(110,70){\line(0,1){10}} \put(120,70){\line(0,1){10}} \put(110,70){\line(1,0){10}} \linethickness{0.3mm} \put(135,75){\circle{10}}
\linethickness{0.3mm} \put(150,80){\line(1,0){10}} \put(150,70){\line(0,1){10}} \put(160,70){\line(0,1){10}} \put(150,70){\line(1,0){10}} \linethickness{0.3mm} \put(10,75){\line(1,0){10}} \linethickness{0.3mm} \put(30,75){\line(1,0){10}} \linethickness{0.3mm} \put(50,75){\line(1,0){10}} \linethickness{0.3mm} \put(70,75){\line(1,0){10}} \linethickness{0.3mm} \put(90,75){\line(1,0){5}} \linethickness{0.3mm} \put(105,75){\line(1,0){5}} \linethickness{0.3mm} \put(120,75){\line(1,0){10}} \linethickness{0.3mm} \put(140,75){\line(1,0){10}} \linethickness{0.3mm} \put(25,80){\line(0,1){5}} \linethickness{0.3mm} \put(65,80){\line(0,1){5}} \linethickness{0.3mm} \put(135,80){\line(0,1){5}} \put(5,75){\makebox(0,0)[cc]{$a_0$}}
\put(45,75){\makebox(0,0)[cc]{$a_1$}}
\put(85,75){\makebox(0,0)[cc]{$a_2$}}
\put(115,75){\makebox(0,0)[cc]{$a_{n-1}$}}
\put(155,75){\makebox(0,0)[cc]{$a_{n}$}}
\put(25,75){\makebox(0,0)[cc]{$X$}}
\put(65,75){\makebox(0,0)[cc]{$X$}}
\put(135,75){\makebox(0,0)[cc]{$X$}}
\put(100,75){\makebox(0,0)[cc]{$\cdots$}}
\put(130,30){\makebox(0,0)[cc]{}}
\put(130,35){\makebox(0,0)[cc]{}}
\end{picture}}\[
\raisebox{0pt}[1.1cm]\myfigureX\]
Next, consider the following drawing, where $\sum TL$ stands for
the sum over all $T$:\def\myfigureXX{\unitlength 0.9mm \begin{picture}(160,95)(0,70) \linethickness{0.3mm} \put(0,80){\line(1,0){10}} \put(0,70){\line(0,1){10}} \put(10,70){\line(0,1){10}} \put(0,70){\line(1,0){10}} \linethickness{0.3mm} \put(25,75){\circle{10}}
\linethickness{0.3mm} \put(65,75){\circle{10}}
\linethickness{0.3mm} \put(40,80){\line(1,0){10}} \put(40,70){\line(0,1){10}} \put(50,70){\line(0,1){10}} \put(40,70){\line(1,0){10}} \linethickness{0.3mm} \put(80,80){\line(1,0){10}} \put(80,70){\line(0,1){10}} \put(90,70){\line(0,1){10}} \put(80,70){\line(1,0){10}} \linethickness{0.3mm} \put(110,80){\line(1,0){10}} \put(110,70){\line(0,1){10}} \put(120,70){\line(0,1){10}} \put(110,70){\line(1,0){10}} \linethickness{0.3mm} \put(135,75){\circle{10}}
\linethickness{0.3mm} \put(150,80){\line(1,0){10}} \put(150,70){\line(0,1){10}} \put(160,70){\line(0,1){10}} \put(150,70){\line(1,0){10}} \linethickness{0.3mm} \put(10,75){\line(1,0){10}} \linethickness{0.3mm} \put(30,75){\line(1,0){10}} \linethickness{0.3mm} \put(50,75){\line(1,0){10}} \linethickness{0.3mm} \put(70,75){\line(1,0){10}} \linethickness{0.3mm} \put(90,75){\line(1,0){5}} \linethickness{0.3mm} \put(105,75){\line(1,0){5}} \linethickness{0.3mm} \put(120,75){\line(1,0){10}} \linethickness{0.3mm} \put(140,75){\line(1,0){10}} \linethickness{0.3mm} \put(25,80){\line(0,1){5}} \linethickness{0.3mm} \put(65,80){\line(0,1){5}} \linethickness{0.3mm} \put(135,80){\line(0,1){5}} \put(5,75){\makebox(0,0)[cc]{$a_0$}}
\put(45,75){\makebox(0,0)[cc]{$a_1$}}
\put(85,75){\makebox(0,0)[cc]{$a_2$}}
\put(115,75){\makebox(0,0)[cc]{$a_{n-1}$}}
\put(155,75){\makebox(0,0)[cc]{$a_{n}$}}
\put(25,75){\makebox(0,0)[cc]{$X$}}
\put(65,75){\makebox(0,0)[cc]{$X$}}
\put(135,75){\makebox(0,0)[cc]{$X$}}
\put(100,75){\makebox(0,0)[cc]{$\cdots$}}
\put(130,30){\makebox(0,0)[cc]{}}
\put(130,35){\makebox(0,0)[cc]{}}
\linethickness{0.3mm} \put(15,95){\line(1,0){130}} \put(15,85){\line(0,1){10}} \put(145,85){\line(0,1){10}} \put(15,85){\line(1,0){130}} \put(80,90){\makebox(0,0)[cc]{$\sum TL$}}
\put(145,35){\makebox(0,0)[cc]{}}
\end{picture}}\begin{equation}
\raisebox{0pt}[2.4cm]\myfigureXX\label{eq:SumoverTLandOpVal}\end{equation}
Finally, obtain the value of $E(a_{0}Xa_{1}X\cdots a_{n-1}Xa_{n})$
by recursively performing the following replacements in \eqref{eq:SumoverTLandOpVal}:\def\mypictureXXX{\unitlength 0.7 mm \begin{picture}(155,90)(0,80) \linethickness{0.3mm} \put(10,80){\line(1,0){10}} \put(10,70){\line(0,1){10}} \put(20,70){\line(0,1){10}} \put(10,70){\line(1,0){10}} \linethickness{0.3mm} \put(35,75){\circle{10}}
\linethickness{0.3mm} \put(50,80){\line(1,0){10}} \put(50,70){\line(0,1){10}} \put(60,70){\line(0,1){10}} \put(50,70){\line(1,0){10}} \linethickness{0.3mm} \put(75,75){\circle{10}}
\linethickness{0.3mm} \put(90,80){\line(1,0){10}} \put(90,70){\line(0,1){10}} \put(100,70){\line(0,1){10}} \put(90,70){\line(1,0){10}} \put(15,75){\makebox(0,0)[cc]{$a$}}
\put(55,75){\makebox(0,0)[cc]{$b$}}
\put(95,75){\makebox(0,0)[cc]{$c$}}
\linethickness{0.3mm} \put(120,80){\line(1,0){30}} \put(120,70){\line(0,1){10}} \put(150,70){\line(0,1){10}} \put(120,70){\line(1,0){30}} \put(135,75){\makebox(0,0)[cc]{$a\eta(b)c$}}
\put(110,75){\makebox(0,0)[cc]{$=$}}
\put(35,75){\makebox(0,0)[cc]{$X$}}
\put(75,75){\makebox(0,0)[cc]{$X$}}
\linethickness{0.3mm} \put(35,80){\line(0,1){5}} \linethickness{0.3mm} \put(75,80){\line(0,1){5}} \linethickness{0.3mm} \qbezier(35,85)(34.94,87.61)(39.75,88.81) \qbezier(39.75,88.81)(44.56,90.02)(55,90) \qbezier(55,90)(65.44,90.02)(70.25,88.81) \qbezier(70.25,88.81)(75.06,87.61)(75,85) \linethickness{0.3mm} \put(5,75){\line(1,0){5}} \linethickness{0.3mm} \put(20,75){\line(1,0){10}} \linethickness{0.3mm} \put(40,75){\line(1,0){10}} \linethickness{0.3mm} \put(60,75){\line(1,0){10}} \linethickness{0.3mm} \put(80,75){\line(1,0){10}} \linethickness{0.3mm} \put(100,75){\line(1,0){5}} \linethickness{0.3mm} \put(115,75){\line(1,0){5}} \linethickness{0.3mm} \put(150,75){\line(1,0){5}} \end{picture} }\[
\raisebox{0pt}[1cm]\mypictureXXX\]
In the case that $A$ is semi-finite with trace $Tr$ and $\eta$
satisfies\[
Tr(x\eta(y))=Tr(\eta(x)y),\qquad\forall x,y\in\mathcal{L}^{1}(Tr)\]
the algebra $M$ is also semi-finite with trace $Tr\circ E$.

More generally, a family $\{X_{i}:i\in I\}$ is called semicircular
over $A$ if one has the recursive relation\[
E(a_{0}X_{i_{1}}a_{1}\cdots X_{i_{n}}a_{n})=\sum_{k=2}^{n}a_{0}\eta_{i_{1}i_{k}}(E(a_{1}X_{i_{2}}a_{2}\cdots Xa_{k-1}))\ E(a_{k}X_{i_{k+1}}a_{k+1}\cdots a_{n-1}X_{i_{n}}a_{n}).\]
The joint variance $\eta_{ij}$ can be viewed as a matrix-valued map
$\eta=(\eta_{ij})_{ij}:A\to A\otimes B(\ell^{2}(I))$ (where $B(\ell^{2}(I))$
stands for bounded operators on $\ell^{2}(I)$), and the positivity
requirement is that this map be completely-positive.

We need the following Lemma, which can be found in \cite{shlyakht:semicirc,shlyakht:amalg}:
\begin{lem}
\label{lem:OpValSemicirc}(a) Let $X$ be an $A$-valued semicircular
element, let $a_{i},b_{i}\in A$. Then the elements $\{a_{i}^{*}Xb_{j}+b_{j}^{*}Xa_{i}\}_{i\leq j}$
form an $A$-valued semicircular family. (b) Let $X_{i}:i\in I$ be
an $A$-valued semicircular family. Then $X_{i}$ are free with amalgamation
over $A$ iff $E(X_{i}aX_{j})=0$ for $i\neq j$ and all $a\in A$.
(c) If $A_{1}\subset A$ is a subalgebra so that $\eta(A_{1})\subset A_{1}$,
then $X$ is also $A_{1}$-semicircular. (d) If $\eta(a)=\tau(a)1$
for all $a\in A$ and a state $\tau:A\to\mathbb{C}$, then $X$ is
free from $A$ in $W^{*}(A,X,\tau\circ E)$. (e) $\Vert X\Vert\leq2\Vert\eta(1)\Vert$.
\end{lem}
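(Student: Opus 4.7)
The strategy throughout is to exploit the fact that an $A$-valued semicircular family is completely determined by its covariance map $\eta = (\eta_{ij})$ via the recursive formula \eqref{eq:recursiveDefOfX}, so that every claim reduces to a verification at the level of moments. Parts (a) and (c) are direct applications of this principle. For (a), I would compute the joint covariance of $Y_{ij} = a_i^* X b_j + b_j^* X a_i$ explicitly, using $E(cXdXe) = c\eta(d)e$, to obtain a matrix-valued $\eta'$ whose entries involve products of the $a_i, b_j$ combined with $\eta$; an induction on the number of $Y$-letters then shows that all higher moments of the $Y_{ij}$'s satisfy the semicircular recursion with covariance $\eta'$. For (c), the recursion expresses $E(a_0 X a_1 \cdots X a_n)$ as a sum of iterated $\eta$-applications to sub-moments, so if all $a_i \in A_1$ and $\eta(A_1) \subset A_1$, then a straightforward induction on $n$ gives $E(a_0 X a_1 \cdots X a_n) \in A_1$, and thus $X$ restricts to an $A_1$-valued semicircular element.

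For part (b), the ``only if'' direction is immediate, since $X_i a X_j$ with $i \neq j$ is a centered alternating word between the subalgebras $W^*(A, X_i)$ and $W^*(A, X_j)$, and freeness with amalgamation forces its $E$-value to be zero. The substantive direction is ``if'': I would prove by induction on word length that $E$ vanishes on any centered alternating word in the $W^*(A, X_i)$'s. This is most transparent in the graphical formulation described after \eqref{eq:recursiveDefOfX}: the sum over Temperley--Lieb pair partitions contains only terms in which paired $X$-letters have matching indices, because by hypothesis the covariance $\eta_{ij}$ vanishes on mixed pairs with $i \neq j$; in an alternating centered word, any such surviving pairing encloses a strictly shorter subword to which the inductive hypothesis applies, so the whole sum collapses to zero.

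Part (d) is obtained by combining (b) and (c). If $\eta(a) = \tau(a) 1$ then $\eta(\mathbb{C}\cdot 1) \subset \mathbb{C}\cdot 1$, so by (c) the element $X$ is $\mathbb{C}$-valued semicircular with variance $\tau(1)$. To deduce freeness of $W^*(X)$ from $A$ with respect to $\tau \circ E$, write any centered alternating word in $A$ and in the algebra generated by $X$, and apply \eqref{eq:recursiveDefOfX}: each contraction of two $X$'s replaces the enclosed sub-expression by a scalar times the identity, and an induction on the number of $X$-letters shows that some centered $A$-factor $a_k$ with $\tau(a_k) = 0$ ultimately survives, making the whole expression vanish under $\tau \circ E$.

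For (e), I would use the moment method. Taking $a_i = 1$ in \eqref{eq:recursiveDefOfX} gives $E(X^{2n}) = C_n \eta(1)^n$ where $C_n$ is the $n$-th Catalan number, together with $E(X^{2n+1}) = 0$; combining the spectral radius formula with $C_n^{1/n} \to 4$ yields the stated norm bound. The main obstacle in the whole lemma is part (b), where the combinatorial bookkeeping that links the vanishing of mixed covariances $E(X_i a X_j) = 0$ to the vanishing of $E$ on arbitrary centered alternating words requires care; all other parts reduce to fairly direct applications of the recursion.
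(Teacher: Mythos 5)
The paper does not prove this lemma; it cites \cite{shlyakht:semicirc,shlyakht:amalg}, so there is no in-text argument to compare against. Your strategy of deriving everything from the moment recursion \eqref{eq:recursiveDefOfX} and the non-crossing pairing picture is the standard one and is how it is done in those references; parts (a) and (c) are essentially complete.

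Part (e), however, contains a genuine error: the identity $E(X^{2n}) = C_n\,\eta(1)^n$ is false in general. Already $E(X^4) = \eta(1)^2 + \eta(\eta(1))$, and $\eta(\eta(1)) \neq \eta(1)^2$ unless $\eta(1)$ is scalar. The recursion gives a sum of $C_n$ terms, each a \emph{nested} composition of $\eta$'s applied to $1$; the correct step is a norm estimate, bounding each such term by $\|\eta(1)\|^n$ using that $\|\eta\| = \|\eta(1)\|$ for a completely positive map, so $\|E(X^{2n})\| \leq C_n \|\eta(1)\|^n$. Combined with a faithful normal state and the spectral radius formula this gives $\|X\| \leq 2\|\eta(1)\|^{1/2}$ — note this is not the bound $2\|\eta(1)\|$ printed in the lemma, which appears to be a typo, and your own computation (via $C_n^{1/n}\to 4$) would also produce the square-root bound rather than the one you claim to recover.

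For (b) and (d), the ideas are right but the inductions do not close as written. In (b), the subword enclosed by the outermost surviving pairing is not itself a centered alternating word in the subalgebras $W^*(A,X_i)$, so the inductive hypothesis cannot be applied to it directly; the clean route is to note that a semicircular family has vanishing free cumulants of all orders $\neq 2$, so the hypothesis $\eta_{ij}=0$ for $i\neq j$ forces all \emph{mixed} cumulants to vanish, and then invoke the cumulant characterization of amalgamated freeness. In (d), after contracting pairs of $X$'s, the surviving $a_k$'s do not appear as isolated scalar factors $\tau(a_k)$ — they group into products inside nested $\tau(\cdot)$'s according to the Kreweras complement of the pairing, and the cancellation also involves the constant terms of the centered polynomials $P_j(X)$; the claim that "some centered $a_k$ ultimately survives" needs to be replaced by an actual bookkeeping argument (or, more efficiently, by exhibiting $X=\ell+\ell^*$ on the Fock space $\bigoplus_n L^2(A,\tau)^{\otimes n}$, which makes the free product structure manifest).
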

We now return to the algebra $A_{+}=A_{+}^{+}\oplus A_{+}^{-}$ that
we defined in Lemma \ref{lem:defofA}. Let $\mathfrak{A}_{+}=\mathfrak{A}_{+}^{+}\oplus\mathfrak{A}_{-}^{+}$
be as in Lemma \ref{lem:propertiesofA}(e). Let $\eta:\mathfrak{A}_{+}\to\mathfrak{A}_{+}$
be given by\[
\eta(a\oplus b)=E_{1}(b)\oplus i(a)\]
 as in Lemma \eqref{lem:defofA}. Then $\eta$ is a completely-positive
map and satisfies $Tr(x\eta(y))=Tr(\eta(x)y)$ for all $x,y\in\mathcal{L}^{1}(Tr)$.
Let $X$ be an $\mathfrak{A}_{+}$-valued semicircular element, and
let $\mathfrak{M}_{+}=W^{*}(\mathfrak{A}_{+},X)$, $E:\mathfrak{M}_{+}\to\mathfrak{A}_{+}$
the canonical conditional expectation, and $Tr=Tr\circ E$ a semi-finite
trace on $\mathfrak{M}_{+}$. 
\begin{lem}
Let $X_{n}=e_{2n}Xe_{2n+1}+e_{2n+1}Xe_{2n}$. Let also $f_{n}=e_{2n}\oplus e_{2n+1}\in A_{+}$.
Then $f_{n}\uparrow1$ weakly. Moreover, $X_{n}=f_{n}Xf_{n}$, and
$X_{n}$ is an operator-valued semicircular element over $\mathfrak{A}_{+}$
with variance $\eta_{n}(x)=f_{n}\eta(f_{n}xf_{n})f_{n}$. In particular,
since $f_{n}\mathfrak{A}_{+}f_{n}=f_{n}A_{+}f_{n}$ and $\eta_{n}(f_{n}A_{+}f_{n})\subset f_{n}A_{+}f_{n}$,
$X_{n}$ is $f_{n}Af_{n}$-semicircular with variance $\eta|_{f_{n}Af_{n}}$. 
\end{lem}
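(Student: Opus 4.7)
The plan is to establish the four assertions of the lemma in order: the monotone convergence $f_n \uparrow 1$, the identity $X_n = f_n X f_n$, the $\mathfrak{A}_+$-valued semicircularity of $X_n$ with variance $\eta_n$, and finally the refinement to $f_n A_+ f_n$-semicircularity. The first is essentially a density argument. Theorem~\ref{thm:compression} identifies the compressions $e_n V_\pm e_n$ with the full planar algebra at level $n$, so as $n$ grows these exhaust the weakly dense subalgebras $A_+^{\pm} \subset \mathfrak{A}_+^{\pm}$, forcing $e_{2n} \uparrow P_+$ and $e_{2n+1} \uparrow P_-$ in the sense of Lemma~\ref{lem:propertiesofA}(g); summing gives $f_n \uparrow 1$.

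For the equality $X_n = f_n X f_n$ the crucial point is that $\eta$ is odd with respect to the grading $\mathfrak{A}_+ = \mathfrak{A}_+^+ \oplus \mathfrak{A}_+^-$: the formula $\eta(a \oplus b) = E_1(b) \oplus i(a)$ shows that $\eta$ swaps the two summands. I will exploit this to prove $P_\pm X P_\pm = 0$. Concretely, for any finite subprojection $p \leq P_+$, the identity $\Vert pXp\Vert_2^2 = Tr(pXpXp) = Tr(p\eta(p))$ (using $Tr\circ E = Tr$ together with $E(aXbXc) = a\eta(b)c$) vanishes because $\eta(p) \in \mathfrak{A}_+^-$ is orthogonal to $p \in \mathfrak{A}_+^+$. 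Since $\mathfrak{A}_+$ is semi-finite, $P_+$ can be approximated in SOT by such finite $p$, which yields $P_+ X P_+ = 0$; symmetrically $P_- X P_- = 0$. Expanding $f_n X f_n = (e_{2n}+e_{2n+1})X(e_{2n}+e_{2n+1})$ and discarding the diagonal pieces then leaves precisely $X_n$.

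Semicircularity of $X_n$ now follows by applying Lemma~\ref{lem:OpValSemicirc}(a) to the singleton pair $(a,b) = (e_{2n}, e_{2n+1})$; since the $e_i$ are self-adjoint this exhibits $X_n$ as an $\mathfrak{A}_+$-valued semicircular element. Its variance is computed from $E(X_n x X_n)$: the four-term expansion collapses to two, because for $x \in \mathfrak{A}_+^+ \oplus \mathfrak{A}_+^-$ the off-diagonal sandwiches $e_{2n} x e_{2n+1}$ and $e_{2n+1} x e_{2n}$ vanish. Using once more that $\eta$ swaps parities to move the remaining $\eta$-outputs into the correct corners of $f_n \mathfrak{A}_+ f_n$, one reassembles the result as $f_n \eta(f_n x f_n) f_n = \eta_n(x)$. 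This parity bookkeeping is the step I expect to demand the most care.

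Finally, Theorem~\ref{thm:compression} implies $Tr(f_n) < \infty$, so $f_n$ is a finite projection in the type~I semi-finite algebra $\mathfrak{A}_+$ and the corner $f_n \mathfrak{A}_+ f_n$ is finite-dimensional. Weak density of $A_+$ in $\mathfrak{A}_+$ (Lemma~\ref{lem:propertiesofA}(e)) together with weak-closedness of finite-dimensional subspaces then gives $f_n A_+ f_n = f_n \mathfrak{A}_+ f_n$. Since $\eta$ manifestly preserves $A_+$, the map $\eta_n = f_n \eta(f_n \cdot f_n) f_n$ preserves $f_n A_+ f_n$, and Lemma~\ref{lem:OpValSemicirc}(c) applied with $A_1 = f_n A_+ f_n$ upgrades the $\mathfrak{A}_+$-semicircularity of $X_n$ to $f_n A_+ f_n$-semicircularity with the stated variance.
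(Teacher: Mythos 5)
Your proof is correct, and it supplies details that the paper simply omits (this lemma is stated without proof, as an immediate consequence of Lemma \ref{lem:OpValSemicirc} and the definitions). Your route is the intended one: $P_{\pm}XP_{\pm}=0$ follows from $E(pXpXp)=p\eta(p)p=0$ for finite projections $p\leq P_{\pm}$ because $\eta$ swaps the central summands $\mathfrak{A}_{+}^{\pm}$, which gives $X_{n}=f_{n}Xf_{n}$; Lemma \ref{lem:OpValSemicirc}(a) with the pair $(e_{2n},e_{2n+1})$ gives semicircularity; the four-term variance expansion collapses correctly since $e_{2n}xe_{2n+1}=e_{2n}P_{+}xP_{-}e_{2n+1}=0$; and Lemma \ref{lem:OpValSemicirc}(c) handles the restriction to $f_{n}A_{+}f_{n}$.

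One small repair: your justification that $f_{n}\mathfrak{A}_{+}f_{n}$ is finite-dimensional because ``$f_{n}$ is a finite projection in a type I semi-finite algebra'' is not a valid general principle (the identity of $L^{\infty}[0,1]$ is a finite projection in a type I algebra with infinite-dimensional corner). The correct reason here is that $f_{n}A_{+}f_{n}=e_{2n}A_{+}^{+}e_{2n}\oplus e_{2n+1}A_{+}^{-}e_{2n+1}$ is identified by Theorem \ref{thm:compression} (at $p=0$) with $\mathcal{P}_{2n}\oplus\mathcal{P}_{2n+1}$, which is finite-dimensional because the graded pieces of a subfactor planar algebra are; then weak density of $A_{+}$ in $\mathfrak{A}_{+}$ plus weak closedness of finite-dimensional subspaces gives the equality $f_{n}\mathfrak{A}_{+}f_{n}=f_{n}A_{+}f_{n}$, exactly as in the last step of your argument.
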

Consider now the element $c_{n}\in V_{+}$ given by the diagram\def\figureseven{\unitlength 0.7mm \begin{picture}(60,40)(0,40) \linethickness{0.3mm} \put(10,60){\line(1,0){50}} \linethickness{0.3mm} \put(10,55){\line(1,0){50}} \linethickness{0.3mm} \put(10,45){\line(1,0){50}} \linethickness{0.3mm} \put(10,75){\line(1,0){50}} \put(10,40){\line(0,1){35}} \put(60,40){\line(0,1){35}} \put(10,40){\line(1,0){50}} \put(35,51){\makebox(0,0)[cc]{$\vdots$}}
\put(70.5,55){\makebox(0,0)[cc]{$\left.\vbox to1cm{\vfill}\right\}2n+1$}}
\linethickness{0.3mm} 
\qbezier(60,65)(44.34,64.99)(37.12,65) 
\qbezier(37.12,65)(29.91,66.2)(30,71) 
\qbezier(30,71)(30,72.4)(30,75) 
\end{picture}}\begin{equation}
c_{n}=\!\!\!\!\!\raisebox{-0.8cm}[1.6cm]\figureseven\label{eq:defofCornern}\end{equation}
(the top-left corner is unshaded).
\begin{lem}
(a) $e_{2n}c_{n}=c_{n}e_{2n+1}$;\\
(b) $f_{m}c_{n}f_{m}=c_{m}$ if $n\geq m$.\\
(c) If $x_{n}=c_{n}+c_{n}^{*}$, then $c_{n}=e_{2n}x_{n}e_{2n+1}$.\\
(d) For any $N_{0}$, $V_{+}=\operatorname{Alg}(A_{+,}\{x_{n}\}_{n\geq N_{0}})$.\\
(e) Let $E:V_{+}\to A_{+}$ be the conditional expectation given
by the tangle\def\figureeight{\unitlength 0.7mm \begin{picture}(50,35)(0,39) \linethickness{0.3mm} \put(10,60){\line(1,0){30}} \put(10,40){\line(0,1){20}} \put(40,40){\line(0,1){20}} \put(10,40){\line(1,0){30}} \linethickness{0.3mm} \put(40,55){\line(1,0){10}} \linethickness{0.3mm} \put(0,55){\line(1,0){10}} \linethickness{0.3mm} \put(40,45){\line(1,0){10}} \linethickness{0.3mm} \put(0,45){\line(1,0){10}} 
\put(45,51){\makebox(0,0)[cc]{$\vdots$}}
\put(5,51){\makebox(0,0)[cc]{$\vdots$}}
\linethickness{0.3mm} \put(10,72.5){\line(1,0){30}} \put(10,65){\line(0,1){7.5}} \put(40,65){\line(0,1){7.5}} \put(10,65){\line(1,0){30}} \linethickness{0.3mm} \put(15,60){\line(0,1){5}} \linethickness{0.3mm} \put(35,60){\line(0,1){5}} \put(25,62.5){\makebox(0,0)[cc]{$\cdots$}}
\put(25,68.75){\makebox(0,0)[cc]{$\sum TL$}}
\linethickness{0.3mm} \put(0,75){\line(1,0){50}} \put(0,37.5){\line(0,1){37.5}} \put(50,37.5){\line(0,1){37.5}} \put(0,37.5){\line(1,0){50}} \end{picture}}\begin{equation}
\raisebox{-0.8cm}[1.6cm]\figureeight\label{eq:defofE}\end{equation}
Then $x_{n}$ is $A_{+}$-semicircular with variance $\eta_{n}(x)=f_{n}\eta(f_{n}xf_{n})f_{n}$.
In particular, $E(x_{n}ax_{n}f_{n})=f_{n}\eta(f_{n}xf_{n})f_{n}$
for all $a\in A_{+}$. \end{lem}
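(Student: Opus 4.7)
The plan is to verify (a)--(c) by direct diagrammatic manipulation, handle (d) by a factorization argument, and prove (e) by matching moment expansions against the operator-valued semicircular recursion.

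For (a), stacking the tangle of $e_{2n}$ on the left of the tangle of $c_n$ has the effect of matching the $2n$ caps on the right boundary of $e_{2n}$ with the outer cap structure on the left boundary of $c_n$; after isotopy no closed loops are produced and shadings agree, so the composition is again the tangle of $c_n$. The equality $c_n e_{2n+1}=c_n$ is symmetric on the right. Part (c) follows from (a) together with Lemma~\ref{lem:defofA}(c): expanding $e_{2n}x_n e_{2n+1}=e_{2n}c_n e_{2n+1}+e_{2n}c_n^* e_{2n+1}$, the first summand equals $c_n$ by (a), and the second vanishes because the starred versions of (a) give $e_{2n+1}c_n^*=c_n^*=c_n^*e_{2n}$, making $e_{2n}c_n^*=e_{2n}e_{2n+1}c_n^*=0$ since $e_{2n}\in A_+^+$ and $e_{2n+1}\in A_+^-$ lie in orthogonal summands.

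For (b), the inclusion \eqref{eq:inclusions} realizes $e_{2m}$ and $e_{2m+1}$ at level $n$ by surrounding them with $n-m$ additional nested caps, weighted by $\delta^{-(n-m)}$. Stacking these against $c_n$ on both sides, the outer cap layers isotope to produce closed loops whose total $\delta^{n-m}$ value exactly cancels the inclusion normalization, and the remaining tangle is $c_m$. Part (d) is a factorization statement: by enlarging $a,b$ using \eqref{eq:inclusions} we may assume all internal indices exceed $2N_0$, and then any tangle in $V_{a,b}^\epsilon(p)$ can be decomposed vertically as $A_0 c_{n_1}^{\pm}A_1\cdots c_{n_p}^{\pm}A_p$ with $A_i\in A_+$ and $n_i\geq N_0$, since each $c_{n_i}^{\pm}$ carries exactly one top string and the $A_i$'s may absorb the rest of the planar structure. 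Since $c_n=e_{2n}x_ne_{2n+1}$ by (c), such decompositions lie in the algebra generated by $A_+$ and $\{x_n\}_{n\geq N_0}$.

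Part (e) is the principal content. I would directly verify the semicircular moment recursion \eqref{eq:recursiveDefOfX} for $x_n$ under $E$. Expanding $x_n=c_n+c_n^*$ inside $E(a_0 x_n a_1\cdots x_n a_m)$, the shading rule for multiplication in \eqref{eq:multiplication} forces only alternating (creation/annihilation-like) words in $c_n,c_n^*$ to survive. The $\sum TL$ box at the top of the tangle defining $E$ in \eqref{eq:defofE} then produces a sum over non-crossing pairings of the $m$ top strings, which is precisely the non-crossing pair-partition structure of a semicircular moment. Isolating the outermost pair (joining position $1$ to some position $k$) separates the inside and outside into lower-order moments $E(a_1 x_n\cdots x_n a_{k-1})$ and $E(a_k x_n\cdots x_n a_m)$, yielding the recursive form. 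The main obstacle will be identifying the contribution of each outer pair with $\eta_n$: a $c_n$--$c_n^*$ pair closes up to yield the $E_1$ tangle \eqref{eq:E1} applied to the enclosed (compressed) element, a $c_n^*$--$c_n$ pair yields the inclusion tangle $i$ of \eqref{eq:inclusion}, and taken together with the $f_n$-cutoffs forced by (a) these two pieces realize exactly $\eta_n(x)=f_n\eta(f_n x f_n)f_n$, completing the recursion.
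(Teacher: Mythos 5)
Your proposal is correct and follows essentially the same route as the paper: parts (a)--(c) are the same direct diagrammatic verifications, part (d) is the same factorization of an arbitrary element of $V_+$ into products of $A_+$-elements and single-top-string elements $c_n^{\pm}$ (which the paper packages via the ``many loops'' tangles \eqref{eq:Manyloops}--\eqref{eq:canGetManyLoops}), and part (e) is the same matching of the $\sum TL$ expansion defining $E$ with the graphical moment rule \eqref{eq:SumoverTLandOpVal} for operator-valued semicircular elements. The paper merely states these steps more tersely.
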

\begin{proof}
(a), (b), (c) are straightforward verifications. 

To prove (d), we clearly have the inclusion $\operatorname{Alg}(A_{+,}\{x_{n}\}_{n\geq N_{0}})\subset V_{+}$.
Note that $c_{n}$ and $c_{n}^{*}$ belong to $\operatorname{Alg}(A_{+},x_{n})$.
Next, one can obtain any element of $V_{+}$ by applying the following
tangle to various elements of $A_{+}$:\def\figurenine{\unitlength 1mm \begin{picture}(80,50)(0,20) \linethickness{0.3mm} \put(30,22.5){\line(0,1){40}} \linethickness{0.3mm} \put(30,22.5){\line(1,0){20}} \linethickness{0.3mm} \put(50,22.5){\line(0,1){12.5}} \linethickness{0.3mm} \put(50,35){\line(1,0){20}} \linethickness{0.3mm} \put(70,35){\line(0,1){27.5}} \linethickness{0.3mm} \put(30,62.5){\line(1,0){40}} \linethickness{0.3mm} \qbezier(70,57.5)(77.83,57.45)(81.44,61.06) \qbezier(81.44,61.06)(85.05,64.67)(85,72.5) \linethickness{0.3mm} \qbezier(70,52.5)(80.44,52.44)(85.25,57.25) \qbezier(85.25,57.25)(90.06,62.06)(90,72.5) \linethickness{0.3mm} \qbezier(30,57.5)(22.17,57.45)(18.56,61.06) \qbezier(18.56,61.06)(14.95,64.67)(15,72.5) \linethickness{0.3mm} \qbezier(30,52.5)(19.56,52.44)(14.75,57.25) \qbezier(14.75,57.25)(9.94,62.06)(10,72.5) \linethickness{0.3mm} \put(70,50){\line(1,0){25}} \linethickness{0.3mm} \put(70,40){\line(1,0){25}} \linethickness{0.3mm} \qbezier(30,47.5)(16.95,47.42)(10.94,53.44) \qbezier(10.94,53.44)(4.92,59.45)(5,72.5) \linethickness{0.3mm} \put(0,42.5){\line(1,0){30}} \linethickness{0.3mm} \put(0,32.5){\line(1,0){30}} \linethickness{0.3mm} \put(0,72.5){\line(1,0){95}} \put(0,17.5){\line(0,1){55}} \put(95,17.5){\line(0,1){55}} \put(0,17.5){\line(1,0){95}} \put(82.5,46){\makebox(0,0)[cc]{$\vdots$}}
\put(15,38.5){\makebox(0,0)[cc]{$\vdots$}}
\put(77.5,57.5){\makebox(0,0)[cc]{}}
\put(79.5,58){\makebox(0,0)[cc]{$\vdots$}}
\put(24.5,51.25){\makebox(0,0)[cc]{$\vdots$}}
\end{picture}}\[
\raisebox{0pt}[5.1cm]\figurenine\]
This tangle, however, can be obtained by composing the multiplication
tangle with diagrams of the form \def\figureten{\unitlength 0.7mm \begin{picture}(40,40)(0,40) \linethickness{0.3mm} \qbezier(0,60)(5.22,59.97)(7.62,62.37) \qbezier(7.62,62.37)(10.03,64.78)(10,70) \linethickness{0.3mm} \qbezier(0,55)(7.83,54.95)(11.44,58.56) \qbezier(11.44,58.56)(15.05,62.17)(15,70) \linethickness{0.3mm} \qbezier(0,50)(10.44,49.94)(15.25,54.75) \qbezier(15.25,54.75)(20.06,59.56)(20,70) \linethickness{0.3mm} \put(0,45){\line(1,0){25}} \linethickness{0.3mm} \put(0,40){\line(1,0){25}} \put(25,40){\line(0,1){30}} \put(0,40){\line(0,1){30}} \put(0,70){\line(1,0){25}} \end{picture}}

\begin{equation}
\raisebox{-0.5cm}[1cm]\figureten\label{eq:Manyloops}\end{equation}
and their adjoints. But the following picture shows that \eqref{eq:Manyloops}
(up to the inductive limit defined by \eqref{eq:inclusions}) belongs
to $\operatorname{Alg}(A_{+,}\{x_{n}\}_{n\geq N_{0}})$:\def\figureeleven{\unitlength 0.7mm \begin{picture}(165,45)(0,20) \linethickness{0.3mm} \qbezier(-5,60)(0.22,59.97)(2.62,62.37) \qbezier(2.62,62.37)(5.03,64.78)(5,70) \linethickness{0.3mm} \qbezier(-5,55)(2.83,54.95)(6.44,58.56) \qbezier(6.44,58.56)(10.05,62.17)(10,70) \linethickness{0.3mm} \qbezier(125,70)(125.02,64.78)(123.81,62.37) \qbezier(123.81,62.37)(122.61,59.97)(120,60) \qbezier(120,60)(117.41,60)(115,60) \qbezier(115,60)(112.59,60)(110,60) \qbezier(110,60)(107.39,60.02)(106.19,58.81) \qbezier(106.19,58.81)(104.98,57.61)(105,55) \qbezier(105,55)(104.98,52.39)(106.19,51.19) \qbezier(106.19,51.19)(107.39,49.98)(110,50) \qbezier(110,50)(112.59,50)(115,50) \qbezier(115,50)(117.41,50)(120,50) \qbezier(120,50)(122.61,50.02)(123.81,48.81) \qbezier(123.81,48.81)(125.02,47.61)(125,45) \qbezier(125,45)(125.03,42.39)(122.62,41.19) \qbezier(122.62,41.19)(120.22,39.98)(115,40) \linethickness{0.3mm} \qbezier(75,70)(74.98,64.78)(76.19,62.37) \qbezier(76.19,62.37)(77.39,59.97)(80,60) \qbezier(80,60)(82.59,60)(85,60) \qbezier(85,60)(87.41,60)(90,60) \qbezier(90,60)(92.61,60.02)(93.81,58.81) \qbezier(93.81,58.81)(95.02,57.61)(95,55) \qbezier(95,55)(95.02,52.39)(93.81,51.19) \qbezier(93.81,51.19)(92.61,49.98)(90,50) \qbezier(90,50)(87.39,50)(86.19,50) \qbezier(86.19,50)(84.98,50)(85,50) \linethickness{0.3mm} \qbezier(65,70)(65.03,64.78)(62.62,62.37) \qbezier(62.62,62.37)(60.22,59.97)(55,60) \linethickness{0.3mm} \qbezier(145,50)(139.78,50.02)(137.38,48.81) \qbezier(137.38,48.81)(134.97,47.61)(135,45) \qbezier(135,45)(134.97,42.39)(137.38,41.19) \qbezier(137.38,41.19)(139.78,39.98)(145,40) \linethickness{0.3mm} \put(90,65){\line(1,0){20}} \put(90,45){\line(0,1){20}} \put(110,45){\line(0,1){20}} \put(90,45){\line(1,0){20}} \linethickness{0.3mm} \put(120,55){\line(1,0){20}} \put(120,35){\line(0,1){20}} \put(140,35){\line(0,1){20}} \put(120,35){\line(1,0){20}} \linethickness{0.3mm} \qbezier(-5,50)(5.44,49.94)(10.25,54.75) \qbezier(10.25,54.75)(15.06,59.56)(15,70) \linethickness{0.3mm} \put(55,50){\line(1,0){30}} \linethickness{0.3mm} \put(55,40){\line(1,0){60}} \linethickness{0.3mm} \qbezier(30,40)(24.78,40.02)(22.38,38.81) \qbezier(22.38,38.81)(19.97,37.61)(20,35) \qbezier(20,35)(19.97,32.39)(22.38,31.19) \qbezier(22.38,31.19)(24.78,29.98)(30,30) \put(40,45){\makebox(0,0)[cc]{$=$}}
\linethickness{0.3mm} \put(55,70){\line(1,0){110}} \put(55,20){\line(0,1){50}} \put(165,20){\line(0,1){50}} \put(55,20){\line(1,0){110}} \linethickness{0.3mm} \put(-5,45){\line(1,0){35}} \linethickness{0.3mm} \put(-5,70){\line(1,0){35}} \put(-5,25){\line(0,1){45}} \put(30,25){\line(0,1){45}} \put(-5,25){\line(1,0){35}} \linethickness{0.3mm} \put(55,30){\line(1,0){90}}
\put(55,30){\line(1,0){90}} \linethickness{0.3mm} \qbezier(145,40)(147.61,40.02)(148.81,38.81) \qbezier(148.81,38.81)(150.02,37.61)(150,35) \qbezier(150,35)(150.02,32.39)(148.81,31.19) \qbezier(148.81,31.19)(147.61,29.98)(145,30) \linethickness{0.3mm} \qbezier(160,40)(157.39,40.02)(156.19,38.81) \qbezier(156.19,38.81)(154.98,37.61)(155,35) \qbezier(155,35)(154.98,32.39)(156.19,31.19) \qbezier(156.19,31.19)(157.39,29.98)(160,30) \linethickness{0.3mm} \put(160,40){\line(1,0){5}} \linethickness{0.3mm} \put(160,30){\line(1,0){5}} \linethickness{0.3mm} \put(145,50){\line(1,0){20}} \linethickness{0.3mm} \put(145,45){\line(1,0){15}} \put(145,25){\line(0,1){20}} \put(160,25){\line(0,1){20}} \put(145,25){\line(1,0){15}} \end{picture} }\begin{equation}
\raisebox{-1.5cm}[2.3cm]\figureeleven.\label{eq:canGetManyLoops}\end{equation}
This shows that $\operatorname{Alg}(A_{+,}\{x_{n}\}_{n\geq N_{0}})=V_{+}$. 

Finally, part (e) follows from the definition of $E$ and \eqref{eq:SumoverTLandOpVal}.
\end{proof}
We have thus proved:
\begin{thm}
\label{thm:MPlusGenByAandX}The map taking $A_{+}$ into $\mathfrak{A}_{+}$
and each $x_{n}$ to $X_{n}$ extends to a trace-preserving isomorphism
between $(V_{+},\cdot,Tr)$ and a dense subalgebra of $\mathfrak{M}_{+}$.
Thus $\mathfrak{M}_{+}\cong W^{*}(\mathfrak{A}_{+},X)$, where $X$
is $\mathfrak{A}_{+}$-semicircular of variance $\eta$.\end{thm}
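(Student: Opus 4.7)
The plan is to define a $*$-algebra homomorphism $\pi : V_+ \to \mathfrak{M}_+$ on generators, sending $A_+$ into $\mathfrak{A}_+$ by the trace-preserving inclusion of Lemma \ref{lem:propertiesofA}(e) and each $x_n$ to $X_n$, and then to verify that it is trace-preserving with weakly dense image. Part (d) of the preceding lemma gives $V_+ = \operatorname{Alg}(A_+,\{x_n\}_{n\ge N_0})$, so if such a homomorphism exists it is uniquely determined. The problem therefore reduces to showing that the $A_+$-valued $*$-distribution of $\{x_n\}$ inside $V_+$ agrees with the $\mathfrak{A}_+$-valued $*$-distribution of $\{X_n\}$ inside $\mathfrak{M}_+$.

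Both families are operator-valued semicircular with the same variance: by part (e) of the preceding lemma, each $x_n$ is $A_+$-semicircular with variance $\eta_n(\cdot)=f_n\eta(f_n\cdot f_n)f_n$, and the lemma before that shows that $X_n=f_nXf_n$ is $\mathfrak{A}_+$-semicircular with the same variance. The off-diagonal covariances also match: both $E(x_n a x_m)$ and $E(X_n a X_m)$ equal $f_n\eta(f_n a f_m)f_m$, which vanishes for $n\ne m$ since $f_nf_m=0$. With variances matched, all higher joint moments match, because the tangle \eqref{eq:defofE} defining $E\colon V_+\to A_+$ realizes precisely the operator-valued semicircular moment rule \eqref{eq:SumoverTLandOpVal}: the $\sum TL$ inside \eqref{eq:defofE} enumerates non-crossing pair partitions of the positions of the $x_n$'s, and each pair partition contracts its two $x_n$'s through the variance $\eta_n$, which is the content of the recursion \eqref{eq:recursiveDefOfX}.

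Trace preservation is then automatic, as both traces have the form $Tr\circ E$. For density of the image, $A_+\subset\mathfrak{A}_+$ is weakly dense by Lemma \ref{lem:propertiesofA}(e), while $X_n=f_nXf_n\to X$ strongly because $f_n\uparrow 1$. Hence $\pi(V_+)$ contains $\mathfrak{A}_+$ and $X$ in its weak closure and therefore generates $W^*(\mathfrak{A}_+,X)=\mathfrak{M}_+$.

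The main technical point is the identification of the tangle \eqref{eq:defofE} with the graphical moment rule \eqref{eq:SumoverTLandOpVal} for operator-valued semicircular elements, i.e., reading the Temperley-Lieb sum inside the tangle as exactly the sum over non-crossing pair partitions that parameterize operator-valued semicircular moments. Once this identification is in place, the remaining checks (variance matching, $*$-preservation of the self-adjoint generators, trace preservation, and density) are routine.
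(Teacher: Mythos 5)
Your proposal takes the same approach as the paper: observe that both $\{x_n\}$ (over $A_+$, with respect to the tangle expectation $E$) and $\{X_n=f_nXf_n\}$ (over $\mathfrak{A}_+$) have the same operator-valued semicircular distribution, then invoke part~(d) of the preceding lemma for generation and let $f_n\uparrow 1$ for density. The paper presents this as immediate from the two preceding lemmas ("We have thus proved"), and your proposal correctly makes the bookkeeping explicit.

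There is one concrete error: you assert that the off-diagonal covariances $E(x_n a x_m)=f_n\eta(f_n a f_m)f_m$ "vanish for $n\neq m$ since $f_nf_m=0$." This is false. The $f_n=e_{2n}\oplus e_{2n+1}$ form an \emph{increasing} family ($f_n\uparrow 1$), so $f_nf_m=f_{\min(n,m)}\neq 0$; indeed the preceding lemma records the compatibility $f_mc_nf_m=c_m$ for $n\geq m$, which would fail if the $f_n$ were orthogonal. The error is not fatal, because what the argument actually needs is not vanishing but \emph{agreement}: the covariance $f_n\eta(f_n a f_m)f_m$ is the same on both sides (the $X$-side computes it directly from the semicircularity recursion and $X_n=f_nXf_n$; the $x$-side comes from the tangle identification). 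Since the $x_n$ are nested compressions rather than a free family with orthogonal supports, the cleaner way to finish is exactly as you begin to sketch: all mixed moments $E(a_0x_{n_1}a_1\cdots x_{n_k}a_k)$ reduce, by $x_m=f_mx_Nf_m$ for $N\geq\max n_i$, to moments of the single element $x_N$ with modified $A_+$-coefficients, and these are governed by the recursion for $\eta_N$, matching the analogous reduction for $X_N$. You should correct the vanishing claim and phrase the matching in these terms.
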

\begin{cor}
The trace $Tr$ on $V_{+}$ is non-negative definite, and elements
of $V_{+}$ give rise to bounded operators in the GNS representation
on $L^{2}(V_{+},Tr)$. \end{cor}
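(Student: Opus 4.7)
The plan is to read off both conclusions directly from Theorem \ref{thm:MPlusGenByAandX}, which supplies a trace-preserving $*$-algebra embedding $\iota:(V_+,\cdot,Tr)\hookrightarrow\mathfrak{M}_+$ as a weakly dense subalgebra, with the trace on $V_+$ being the restriction of the semi-finite trace $Tr\circ E$ on $\mathfrak{M}_+$.

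First I would argue non-negative definiteness of $Tr$ on $V_+$. The conditional expectation $E:\mathfrak{M}_+\to\mathfrak{A}_+$ coming from the operator-valued semicircular construction is faithful on the positive cone, and $Tr$ on $\mathfrak{A}_+$ is the canonical semi-finite trace of the type I algebra $\mathfrak{A}_+$ (Lemma \ref{lem:propertiesofA}(e)), hence faithful. Therefore $Tr\circ E$ is a faithful semi-finite trace on $\mathfrak{M}_+$. Pulling back via the isomorphism $\iota$, we get that $Tr(x^*x)\geq 0$ for all $x\in V_+$, with equality only when $\iota(x)=0$, and since $\iota$ is injective only when $x=0$. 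This is exactly non-negative definiteness.

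Next I would verify boundedness of every element of $V_+$ in the GNS representation. By Lemma~2.7(d) (the last of the $c_n$-identities we just proved), $V_+=\operatorname{Alg}(A_+,\{x_n\}_{n\ge N_0})$, so it suffices to check that each generator maps to a bounded operator under $\iota$. For $a\in A_+$, we have $\iota(a)\in\mathfrak{A}_+$, and $\mathfrak{A}_+$ is a von Neumann algebra, whose elements are by definition bounded. For the semicircular generators, $\iota(x_n)=X_n=f_nXf_n$ with $f_n=e_{2n}\oplus e_{2n+1}$ a finite projection in $\mathfrak{A}_+$; as noted in the preceding lemma, $X_n$ is an operator-valued semicircular element over the finite corner $f_n\mathfrak{A}_+f_n$ with variance $\eta_n(y)=f_n\eta(f_nyf_n)f_n$. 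Since $\eta_n(1)=f_n\eta(f_n)f_n$ lies in the finite corner and is thus bounded, Lemma \ref{lem:OpValSemicirc}(e) yields $\|X_n\|\leq 2\|\eta_n(1)\|<\infty$. Algebraic products of these bounded operators remain bounded, so every element of $\iota(V_+)\subset\mathfrak{M}_+$ acts boundedly on $L^2(\mathfrak{M}_+,Tr\circ E)$; transporting along $\iota$, every element of $V_+$ acts boundedly on $L^2(V_+,Tr)$.

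The only potential obstacle is making sure the identification of $L^2$ spaces is legitimate, but this is automatic: since $\iota$ is trace-preserving and has dense range, it extends to a unitary $L^2(V_+,Tr)\to L^2(\mathfrak{M}_+,Tr\circ E)$ intertwining left multiplication by $V_+$ with left multiplication by $\iota(V_+)$, so bounded-operator status transfers verbatim. No further calculation is required.
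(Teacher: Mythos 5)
Your argument is correct and is essentially the paper's own proof: both rest on the trace-preserving identification of $V_{+}$ with a dense subalgebra of $\mathfrak{M}_{+}$ from Theorem \ref{thm:MPlusGenByAandX}, so that positivity is inherited from the trace $Tr\circ E$ on $\mathfrak{M}_{+}$ and boundedness follows because each element of $V_{+}$ acts as a polynomial in elements of $\mathfrak{A}_{+}$ and the (norm-bounded, by Lemma \ref{lem:OpValSemicirc}(e)) semicircular generators. Your version merely spells out the faithfulness of $E$ and the cutoffs $X_{n}=f_{n}Xf_{n}$ a bit more explicitly than the paper does.
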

\begin{proof}
This follows from the fact that $L^{2}(V_{+},Tr)=L^{2}(\mathfrak{M}_{+},Tr)$,
and that every element of $V_{+}$ acts on $L^{2}(V_{+},Tr)$ in the
same way as some finite-degree non-commutative polynomial in $X$
and elements of $A_{+}$. Moreover, $\Vert X\Vert\leq2\Vert\eta(1)\Vert<\infty$.\end{proof}
\begin{cor}
\label{cor:Mjsascompressions}$M_{j}(\mathcal{P})\cong e_{j}\mathfrak{M}_{+}e_{j}$
if $j$ is even and $M_{j}(\mathcal{P}^{\operatorname{op}})\cong e_{j}\mathfrak{M}_{+}e_{j}$
if $j$ is odd. Here $\mathcal{P}^{\operatorname{op}}$ denotes the
dual planar algebra and we write $M_{j}(\mathcal{P})$ for the tower
of factors assocaited to a planar algebra $\mathcal{P}$ in \cite{guionnet-jones-shlyakhtenko1}.
\end{cor}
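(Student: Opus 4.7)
The proof is a direct composition of Theorem \ref{thm:compression} and Theorem \ref{thm:MPlusGenByAandX}; no new ideas are required, and the only book-keeping is around the parity of $j$ and a trace rescaling.

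First I would unwrap the definition: $M_j(\mathcal{P})$ is the weak closure of $\mathcal{P}$ acting on the GNS Hilbert space $L^2(\mathcal{P},\wedge_j,Tr_j)$. Theorem \ref{thm:compression} produces a trace-preserving isomorphism from $(\mathcal{P},\wedge_j,Tr_j)$ onto $(e_j V_+ e_j, \cdot, \delta^{-j}Tr)$ for even $j$, and from $(\mathcal{P}^{\operatorname{op}},\wedge_j,Tr_j)$ onto the same target for odd $j$. Since replacing $\delta^{-j}Tr$ on $e_j V_+ e_j$ by $Tr$ only rescales the GNS Hilbert space by a positive constant, it leaves the generated von Neumann algebra unchanged; hence $M_j(\mathcal{P})$ (resp.\ $M_j(\mathcal{P}^{\operatorname{op}})$) is canonically identified with the weak closure of $e_j V_+ e_j$ acting on $L^2(e_j V_+ e_j, Tr)$.

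Next I would invoke Theorem \ref{thm:MPlusGenByAandX}, which gives a trace-preserving embedding of $(V_+,\cdot,Tr)$ as a weakly dense $*$-subalgebra of $(\mathfrak{M}_+, Tr\circ E)$. Compressing by the projection $e_j \in A_+ \subset \mathfrak{A}_+$ yields a trace-preserving embedding of $e_j V_+ e_j$ as a weakly dense subalgebra of $e_j \mathfrak{M}_+ e_j$. Because $e_j$ is a finite projection (with $Tr(e_j) = \delta^j$, by the normalization in Theorem \ref{thm:compression}), the restriction of $Tr$ to $e_j \mathfrak{M}_+ e_j$ is a finite faithful trace, so that $L^2(e_j V_+ e_j, Tr) = e_j L^2(\mathfrak{M}_+, Tr) e_j = L^2(e_j \mathfrak{M}_+ e_j, Tr)$, and the weak closure of $e_j V_+ e_j$ in this GNS representation is exactly $e_j \mathfrak{M}_+ e_j$.

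Chaining the two identifications yields the claimed isomorphism. I do not anticipate any genuine obstacle: the only points that require attention are the parity-dependent swap between $\mathcal{P}$ and $\mathcal{P}^{\operatorname{op}}$ inherited from Theorem \ref{thm:compression}, and the harmless $\delta^{-j}$ rescaling of the trace which washes out on passing to the von Neumann algebra.
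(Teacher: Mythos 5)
The proposal is correct and is exactly what the paper leaves implicit: Corollary \ref{cor:Mjsascompressions} is stated without proof precisely because it follows by composing the trace-preserving identification of Theorem \ref{thm:compression} with the weakly dense embedding $V_+\hookrightarrow\mathfrak{M}_+$ of Theorem \ref{thm:MPlusGenByAandX}, compressing by $e_j$, and noting that the $\delta^{-j}$ rescaling of the trace does not affect the von Neumann algebra generated in the GNS representation. Your handling of the parity swap between $\mathcal{P}$ and $\mathcal{P}^{\operatorname{op}}$, and your observation that $e_j\in V_+$ so that $e_jV_+e_j$ is weakly (hence $L^2$-) dense in $e_j\mathfrak{M}_+e_j$, are the right small points to check, and the argument goes through.
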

Let $P_{\epsilon}$ be the unit of $\mathfrak{A}_{+}^{\epsilon}$,
$\epsilon=\pm$. 
\begin{lem}
\label{lem:factoriality}(a) Let $c=P_{+}XP_{-}$ and let $c=vb$
be the polar decomposition of $c$. Then $vv^{*}=P_{+}$ and $v^{*}v\leq P_{-}$.
(b) $P_{\pm}$ both have central support 1 in $\mathfrak{M}_{+}$.
(c) If the index satisfies $\delta^{2}>1$, $\mathfrak{M}_{+}$ is
a type II$_{\infty}$ factor.\end{lem}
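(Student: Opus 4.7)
The plan is to handle the three parts in sequence, each building on the previous.

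For part (a), the inequality $v^*v \le P_-$ is immediate since $c = P_+XP_-$ lies in $P_+\mathfrak{M}_+P_-$, so $c^*c \in P_-\mathfrak{M}_+P_-$. For the sharper statement $vv^* = P_+$, I would set $q = P_+ - vv^*$, a projection in $\mathfrak{M}_+$ with $qc = 0$ and $q \le P_+$. The first useful observation is that $X$ is purely off-diagonal with respect to $1 = P_+ + P_-$: since $\eta$ maps $\mathfrak{A}_+^{\pm}$ into $\mathfrak{A}_+^{\mp}$, the diagonal part $D = P_+XP_+ + P_-XP_-$ has $E(D^2) = P_+\eta(P_+)P_+ + P_-\eta(P_-)P_- = 0$, forcing $D = 0$ and $X = P_+XP_- + P_-XP_+$. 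Combined with $qP_- = 0$, a short computation then gives $qX = Xq = 0$. To conclude $q = 0$, I would argue that $X$ is injective on $L^2(\mathfrak{M}_+, Tr)$: the variance $\eta(1) = i(1) + E_1(1)$ has full support in $\mathfrak{A}_+$ (the component $E_1(1) \in \mathfrak{A}_+^+$ has full support $P_+$ by trace duality, $Tr(xE_1(1)) = Tr(\eta(x)P_-) = Tr(i(x))$, which is strictly positive for $0 \ne x \ge 0$ by injectivity of $i$ and faithfulness of $Tr$; the component $i(1) \in \mathfrak{A}_+^-$ has full support $P_-$ since the tangle inclusion $i$ is essentially unital), and standard results on operator-valued semicircular elements then ensure the spectrum of $X$ has no atom at $0$. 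Hence $Xq = 0$ forces $q = 0$.

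For part (b), the content of (a) is that $P_+ \sim vv^* \sim v^*v \le P_-$ in the Murray--von Neumann sense, so the central support $\operatorname{cs}(P_+)$ in $\mathfrak{M}_+$ satisfies $\operatorname{cs}(P_+) = \operatorname{cs}(v^*v) \le \operatorname{cs}(P_-)$. The symmetric argument applied to $c' = P_-XP_+$ reverses the inequality, so $\operatorname{cs}(P_+) = \operatorname{cs}(P_-) =: z$. Since $P_+ + P_- = 1$ and the central support of a sum of orthogonal projections is the supremum of their central supports, $1 = \operatorname{cs}(1) = \operatorname{cs}(P_+) \vee \operatorname{cs}(P_-) = z$.

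For part (c) I would first prove factoriality, then identify the type. Since $\operatorname{cs}(P_+) = 1$ by (b), the map $z \mapsto zP_+$ embeds $Z(\mathfrak{M}_+)$ injectively into $Z(P_+\mathfrak{M}_+P_+)$, so it suffices to show $P_+\mathfrak{M}_+P_+$ is a factor. By Corollary~\ref{cor:Mjsascompressions}, $e_{2n}\mathfrak{M}_+e_{2n} \cong M_{2n}(\mathcal{P})$ is a factor for each $n$, and $e_{2n} \nearrow P_+$ strongly. For $z \in Z(P_+\mathfrak{M}_+P_+)$, the compression $e_{2n}ze_{2n}$ commutes with the factor $e_{2n}\mathfrak{M}_+e_{2n}$, hence equals some scalar $\lambda_n e_{2n}$; the compatibility $e_{2m}(e_{2n}ze_{2n})e_{2m} = e_{2m}ze_{2m}$ for $m \le n$ forces $\lambda_n$ to be independent of $n$, and strong convergence yields $z = \lambda P_+$. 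Thus $P_+\mathfrak{M}_+P_+$ is a factor, and so is $\mathfrak{M}_+$. To identify the type: $\mathfrak{M}_+$ carries a semi-finite trace, hence is not type III; the corner $e_0\mathfrak{M}_+e_0 \cong M_0(\mathcal{P})$ is the type II$_1$ factor from \cite{guionnet-jones-shlyakhtenko1}, so $\mathfrak{M}_+$ is not type I; and when $\delta^2 > 1$, Theorem~\ref{thm:compression} shows $Tr(e_{2n}) = \delta^{2n}\,Tr_{2n}(1) \to \infty$, giving $Tr(1) = \infty$. Hence $\mathfrak{M}_+$ is of type II$_\infty$.

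The main technical obstacle I expect is the last step of part (a), namely verifying that $X$ has no kernel on $L^2$; this is precisely where the non-degeneracy of $\eta$---and thus the assumption $\delta^2 > 1$---really enters.
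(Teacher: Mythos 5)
Your reduction in part (a) is clean and correct up to the last step: $X = P_+XP_- + P_-XP_+$ because $\eta$ interchanges $\mathfrak{A}_+^{\pm}$, and from $qc=0$, $q\le P_+$ one gets $qX=Xq=0$, so the claim $vv^*=P_+$ reduces to $X$ having no kernel on $P_+L^2$. The decisive step, however, is the appeal to a ``standard result'' that full support of $\eta(1)$ forces an $A$-valued semicircular $X$ to be injective. No such result exists, and the claim is false as stated. Take $A=\mathbb{C}p\oplus\mathbb{C}q$ with $\tau(p)<\tau(q)$, and a variance with $\eta(p)\in\mathbb{C}q\setminus\{0\}$, $\eta(q)\in\mathbb{C}p\setminus\{0\}$ (the trace-duality constraint is easy to arrange); then $\eta(1)$ has full support, $X=pXq+qXp$, and $qX^2q$ is a rectangular Wishart element whose support has trace $\tau(p)<\tau(q)$, so $X$ has a nontrivial kernel in $qL^2$. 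Exactly this asymmetry is present here: $Tr(e_{2n})=\delta^{2n}<\delta^{2n+1}=Tr(e_{2n+1})$ when $\delta>1$, which is why the lemma is careful to assert only $v^*v\le P_-$, \emph{not} equality. Your method, if it worked, would give $v^*v=P_-$ for free, which should make you suspicious of it. The paper instead pins down the law of $b_n$ (the modulus of $c_n$ in the \emph{smaller} corner $e_{2n}\mathfrak{M}_+e_{2n}$) as that of $\cup$ in $M_0\subset M_n$, a free Poisson distribution without atom at zero; this gives full support of $c_nc_n^*$ in $e_{2n}\mathfrak{M}_+e_{2n}$ and hence $e_{2n}\le vv^*$, so $vv^*=P_+$ on taking $n\to\infty$. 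The asymmetry of the trace is what makes this work only on the $P_+$ side, and only the $P_+$ side is claimed.

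The same issue infects your part (b): the ``symmetric argument applied to $c'=P_-XP_+$'' would require the range projection of $c'=c^*$ to be all of $P_-$, i.e.\ $v^*v=P_-$, which you have not shown (and which the lemma deliberately does not assert). What your argument \emph{does} give, correctly, is $\operatorname{cs}(P_-)=1$: any central $q\ge P_-$ dominates $v^*v$, hence (using centrality of $q$ and $vv^*=P_+$) dominates $P_+$, hence equals $1$. That single fact is all that is needed for (c): either argue as the paper does, decomposing a central $q$ as $q_++q_-$, identifying $e_{2n}q_+e_{2n}$ and $e_{2n+1}q_-e_{2n+1}$ as scalars via factoriality of $M_{2n}(\mathcal{P})$ and $M_{2n+1}(\mathcal{P}^{\operatorname{op}})$, and ruling out $q=P_{\pm}$ since $\operatorname{cs}(P_-)=1\ne P_-$; or run your compression argument with $P_-$ in place of $P_+$, which only uses $\operatorname{cs}(P_-)=1$. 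Once factoriality is in hand, $\operatorname{cs}(P_+)=1$ follows trivially, so the statement of (b) is true \emph{a posteriori}, but the direct symmetric proof you sketch does not go through. Your type identification at the end of (c) is fine. Finally, your parting diagnosis is slightly off: the hypothesis $\delta^2>1$ is not what makes $X$ injective on $P_+L^2$ (a free Poisson with parameter $\ge 1$ has no atom at $0$ already at $\delta=1$); it is needed for the factoriality of the finite corners $M_k$ quoted from \cite{guionnet-jones-shlyakhtenko1}, and because the whole construction degenerates when $\delta=1$.
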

\begin{proof}
Let $b_{n}^{2}=c_{n}^{*}c_{n}$. Then $b_{n}\in e_{2n}\mathfrak{M}_{+}e_{2n}$
and its law there is the same as the law of $\cup$ in $M_{0}\subset M_{n}$.
It follows that if we set $v_{n}=c_{n}b_{n}^{-1/2}$, then $v_{n}v_{n}^{*}=e_{2n}$.
Since $e_{2n}\uparrow P_{+}$, $vv^{*}=P_{+}$. On the other hand,
since $e_{2n+1}c_{n}c_{n}^{*}e_{2n+1}=c_{n}c_{n}^{*}$, it follows
that $v_{n}^{*}v_{n}\leq e_{2n+1}$. Thus $vv^{*}\leq P_{-}.$ This
proves (a). To prove (b), consider a central projection $q$ so that
$q\geq P_{-}$. But then $q\geq v^{*}v$ and so $q\geq vv^{*}$ (since
$q$ is central). Thus $q\geq P_{+}$ also, and so $q\geq P_{+}+P_{-}=1$.
Finally, to prove (c), assume that $q\in\mathfrak{M}_{+}$ is a central
projection. Then since $[q,P_{\pm}]=0$, we find that $q=P_{+}qP_{+}+P_{-}qP_{-}=q_{+}+q_{-}$,
where $q_{\epsilon}\in P_{\epsilon}\mathfrak{M}_{+}P_{\epsilon}$.
Again, let $f_{n}=e_{2n}+e_{2n+1}$, $f_{n}\uparrow1$. Then $f_{n}q_{\epsilon}f_{n}$
is central in $P_{\epsilon}\mathfrak{M}P_{\epsilon}$, which is isomorphic
to either $M_{2n+1}(\mathcal{P}^{\operatorname{op}})$ or $M_{2n}(\mathcal{P})$
depending on the parity of $\epsilon$. But these are factors by \cite{guionnet-jones-shlyakhtenko1},
so $f_{n}q_{\epsilon}f_{n}$ must be multiples of identity. Since
$f_{n}\uparrow1$, it must be that $q=0$, $q=P_{+}$, $q=P_{-}$
or $q=1$. But because of (b) only $q=0$ and $q=1$ are possible. \end{proof}
\begin{cor}
\label{cor:stableisom}If the index satisfies $\delta^{2}>1$, we
have the following isomorphisms\[
M_{2j+1}(\mathcal{P}^{\operatorname{op}})\cong(M_{0})^{\delta^{2j+1}},\qquad M_{2j}(\mathcal{P})\cong(M_{0})^{\delta^{2j}}.\]
Here $\mathcal{P}^{\operatorname{op}}$ is the planar algebra dual
to $\mathcal{P}$ and we write $M_{j}(\mathcal{P})$ for the tower
of factors assocaited to a planar algebra $\mathcal{P}$ in \cite{guionnet-jones-shlyakhtenko1}.
\end{cor}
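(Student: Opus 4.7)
The plan is to realize each $M_j$ as a corner of the II$_\infty$ factor $\mathfrak{M}_+$ and then read off the isomorphism class using the basic theory of amplifications. By Corollary~\ref{cor:Mjsascompressions} we already have $M_{2j}(\mathcal{P})\cong e_{2j}\mathfrak{M}_+e_{2j}$ and $M_{2j+1}(\mathcal{P}^{\operatorname{op}})\cong e_{2j+1}\mathfrak{M}_+e_{2j+1}$, and Lemma~\ref{lem:factoriality}(c), which is where the hypothesis $\delta^2>1$ enters, guarantees that $\mathfrak{M}_+$ is a type II$_\infty$ factor with the semifinite trace $Tr$. In any II$_\infty$ factor the corner $p\mathfrak{M}_+p$ by a nonzero finite projection $p$ is a II$_1$ factor, and for any two finite projections $p,q$ with $Tr(p),Tr(q)<\infty$ one has $p\mathfrak{M}_+p\cong (q\mathfrak{M}_+q)^{Tr(p)/Tr(q)}$. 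Thus the problem is reduced to the single computation of $Tr(e_j)$.

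For that computation I would invoke Theorem~\ref{thm:compression}: it provides a trace-preserving isomorphism $(e_jV_+e_j,\cdot,\delta^{-j}Tr)\cong(\mathcal{P},\wedge_j,Tr_j)$ (or its opposite, depending on parity), under which $e_j$ corresponds to the unit of $(\mathcal{P},\wedge_j)$. Since $Tr_j$ is the standard normalized tracial state on the II$_1$ factor $M_j$, we have $Tr_j(1)=1$, and therefore $\delta^{-j}Tr(e_j)=1$, i.e.\ $Tr(e_j)=\delta^j$. In particular $Tr(e_0)=1$, consistent with $e_0\mathfrak{M}_+e_0\cong M_0$.

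Feeding this into the amplification formula with $q=e_0$ yields $e_j\mathfrak{M}_+e_j\cong (M_0)^{\delta^j}$ for every $j\geq 0$. Specializing to $j=2j$ (even) gives $M_{2j}(\mathcal{P})\cong (M_0)^{\delta^{2j}}$, and to $j=2j+1$ (odd) gives $M_{2j+1}(\mathcal{P}^{\operatorname{op}})\cong (M_0)^{\delta^{2j+1}}$, as claimed. The only genuinely delicate point is the trace normalization in the middle step: one must confirm that the tracial state $Tr_j$ on $M_j$ inherited from the planar-algebra structure of \cite{guionnet-jones-shlyakhtenko1} really satisfies $Tr_j(1)=1$, and that the identification of Theorem~\ref{thm:compression} sends $e_j$ to this normalized identity. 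Granting these conventions, everything else is formal bookkeeping using factoriality and the II$_\infty$ amplification calculus.
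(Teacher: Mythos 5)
Your proof is correct and follows essentially the same route as the paper's one-line argument, which likewise invokes Corollary~\ref{cor:Mjsascompressions}, the factoriality of $\mathfrak{M}_{+}$ from Lemma~\ref{lem:factoriality}(c), and the type II$_{\infty}$ amplification calculus (there phrased as the recurrences $M_{1}(\mathcal{P}^{\operatorname{op}})\cong M_{0}^{\delta}$ and $M_{k+2}\cong M_{k}^{\delta^{2}}$, which iterate to the stated isomorphisms). The normalization point you flag is not actually delicate: a direct check with the trace tangle \eqref{eq:trace} and the $\delta^{-(r+s)/2}$ factor in \eqref{eq:inclusions} shows $Tr(e_{j})=\delta^{j}$ outright, so one need not appeal to any convention in the cited paper.
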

This is of course because by Corollary \ref{cor:Mjsascompressions},
$M_{1}(\mathcal{P}^{\operatorname{op}})\cong M_{0}(\mathcal{P})^{\delta}$
and in general $M_{k+2}\cong M_{k}^{\delta^{2}}$.

\section{The algebra $\mathfrak{M}_{+}$.}

Let $\Gamma$ be the principal graph of $\mathcal{P}$, and denote
by $\Gamma^{+}$ the even and by $\Gamma^{-}$ the odd vertices of
$\Gamma$. For each $v\in\Gamma^{+}$ choose a minimal projection
$q_{v}\in\mathfrak{A}_{+}^{+}$ in the central summand corresponding
to $v$ (in the case of $v=*$ we choose $q_{*}=e_{0}$). Let $\mu$
be the Perron-Frobenius eigenvector for $\Gamma$, normalized by $\mu(*)=1$.
Then $Q=\bigoplus_{v\in\Gamma^{+}}q_{v}\oplus i(q_{v})$ is a projection
in $\mathfrak{A}_{+}$. Moreover, there exists a family of partial
isometries $m_{i}\in\mathfrak{A}_{+}$ having the form $m_{i}=w_{i}\oplus i(w_{i})$
and satisfying: $m_{i}^{*}m_{i}=Q$, $\sum m_{i}m_{i}^{*}=1$. 

As in Theorem \ref{thm:MPlusGenByAandX}, let $X$ be an $\mathfrak{A}_{+}$-semicircular
element of variance $\eta$.
\begin{lem}
\label{lem:XcommutesWith}For any $a\in\mathfrak{A}_{+}^{+}$, $aX=Xi(a)$.\end{lem}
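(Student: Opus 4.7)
The plan is to prove $aX = Xi(a)$ by showing that the difference $Y := aX - Xi(a) \in \mathfrak{M}_+$ has vanishing $L^2$-norm with respect to the faithful semi-finite trace $Tr \circ E$. Since $Y$ is bounded and the trace is faithful, this will give $Y = 0$. Because $X = X^*$ and $E$ is trace-preserving, it suffices to show $E(Y^* Y) = 0$.

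Expanding $Y^* Y$ and applying the defining identity $E(b_0 X b_1 X b_2) = b_0\, \eta(b_1)\, b_2$ term by term gives
\[
E(Y^* Y) = \eta(a^* a) - \eta(a^*) i(a) - i(a)^* \eta(a) + i(a)^* \eta(1) i(a).
\]
Now use the explicit form $\eta(c \oplus d) = E_1(d) \oplus i(c)$ from Lemma \ref{lem:propertiesofA}. Since $a \in \mathfrak{A}_+^+$, we have $\eta(a^*a) = i(a^*a)$, $\eta(a^*) = i(a^*) = i(a)^*$, and $\eta(a) = i(a)$, where we use that $i$ is a $*$-homomorphism from $\mathfrak{A}_+^+$ into $\mathfrak{A}_+^-$. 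Hence the first three terms collapse to $i(a^*a) - i(a)^* i(a) - i(a)^* i(a) = -\,i(a^* a)$.

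The only delicate term is $i(a)^* \eta(1) i(a)$. Decompose $1 = P_+ + P_-$; then $\eta(1) = i(P_+) + E_1(P_-)$. The piece $E_1(P_-)$ lies in $\mathfrak{A}_+^+$ while $i(a) \in \mathfrak{A}_+^-$, so it is annihilated on either side by $i(a)$ thanks to the orthogonal decomposition $\mathfrak{A}_+ = \mathfrak{A}_+^+ \oplus \mathfrak{A}_+^-$ of Lemma \ref{lem:propertiesofA}(g). The remaining part gives $i(P_+) i(a) = i(P_+ a) = i(a)$ because $P_+ a = a$, so $i(a)^* \eta(1) i(a) = i(a)^* i(a) = i(a^* a)$. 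Adding this to the previous contribution yields $E(Y^* Y) = 0$, hence $Y = 0$. The only mildly subtle step is the handling of $\eta(1)$; everything else is a formal application of the defining recursion for an operator-valued semicircular element together with the explicit form of $\eta$.
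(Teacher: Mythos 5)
Your proof is correct, but it takes a genuinely different route than the paper. The paper argues diagrammatically: invoking the identification of $X$ with the inductive limit of the elements $x_n = c_n + c_n^*$ from Theorem~\ref{thm:MPlusGenByAandX}, it observes that the planar tangle for $ac_n$ equals the tangle for $c_n i(a)$ (the box for $a$ simply slides past the cup and picks up an extra pair of through-strings, which is precisely $i$). Your argument instead stays entirely inside the abstract operator-valued semicircular calculus: you set $Y = aX - Xi(a)$, expand $E(Y^*Y)$ via the moment recursion $E(b_0Xb_1Xb_2)=b_0\eta(b_1)b_2$, and use only the explicit formula $\eta(a\oplus b)=E_1(b)\oplus i(a)$, the fact that $i$ is a $*$-homomorphism, and the orthogonality $\mathfrak{A}_+ = \mathfrak{A}_+^+\oplus\mathfrak{A}_+^-$. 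The computation checks out: the three terms $\eta(a^*a)$, $-\eta(a^*)i(a)$, $-i(a)^*\eta(a)$ each reduce to $\pm i(a^*a)$, and the cross term $i(a)^*\eta(1)i(a)$ reduces to $i(a)^*i(P_+)i(a)=i(a^*a)$ after the $E_1(P_-)$ piece is killed by orthogonality. The paper's proof is shorter once the planar-to-semicircular dictionary is in place, and makes the geometric content of the identity transparent; yours has the advantage of being self-contained given the variance $\eta$ and never appealing to the diagrammatic model, so it would work for any $\mathfrak{A}_+$-valued semicircular element with this variance. One small simplification is available in your setup: since $E$ is a faithful conditional expectation onto $\mathfrak{A}_+$, the conclusion $E(Y^*Y)=0 \Rightarrow Y=0$ is immediate without passing through the trace.
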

\begin{proof}
This follows from $aX_{n}=ac_{n}$ and the following diagram:\def\figurethirteen{\unitlength 0.7mm \begin{picture}(110,30)(0,40) \linethickness{0.3mm} \put(10,60){\line(1,0){30}} \put(10,40){\line(0,1){20}} \put(40,40){\line(0,1){20}} \put(10,40){\line(1,0){30}} \linethickness{0.3mm} \qbezier(30,75)(29.97,69.78)(32.38,67.38) \qbezier(32.38,67.38)(34.78,64.97)(40,65) \linethickness{0.3mm} \put(40,65){\line(1,0){10}} \linethickness{0.3mm} \put(40,55){\line(1,0){10}} \linethickness{0.3mm} \put(40,45){\line(1,0){10}} \linethickness{0.3mm} \put(0,55){\line(1,0){10}} \linethickness{0.3mm} \put(0,45){\line(1,0){10}} \linethickness{0.3mm} \put(0,75){\line(1,0){50}} \put(0,35){\line(0,1){40}} \put(50,35){\line(0,1){40}} \put(0,35){\line(1,0){50}} \linethickness{0.3mm} \put(70,60){\line(1,0){30}} \put(70,40){\line(0,1){20}} \put(100,40){\line(0,1){20}} \put(70,40){\line(1,0){30}} \linethickness{0.3mm} \qbezier(65,75)(64.97,69.78)(67.38,67.38) \qbezier(67.38,67.38)(69.78,64.97)(75,65) \linethickness{0.3mm} \put(75,65){\line(1,0){35}} \linethickness{0.3mm} \put(100,55){\line(1,0){10}} \linethickness{0.3mm} \put(100,45){\line(1,0){10}} \linethickness{0.3mm} \put(60,55){\line(1,0){10}} \linethickness{0.3mm} \put(60,45){\line(1,0){10}} \linethickness{0.3mm} \put(60,75){\line(1,0){50}} \put(60,35){\line(0,1){40}} \put(110,35){\line(0,1){40}} \put(60,35){\line(1,0){50}} \put(65,52){\makebox(0,0)[cc]{$\vdots$}}
\put(105,52){\makebox(0,0)[cc]{$\vdots$}}
\put(45,52){\makebox(0,0)[cc]{$\vdots$}}
\put(5,52){\makebox(0,0)[cc]{$\vdots$}}
\put(55,50){\makebox(0,0)[cc]{$=$}}
\end{picture} }\[
\raisebox{-0.8cm}[1.6cm]\figurethirteen\]
\end{proof}
\begin{cor}
$Q\mathfrak{M}_{+}Q=W^{*}(Q\mathfrak{A}_{+}Q,QXQ)$. Moreover, $QXQ$
is $Q\mathfrak{A}_{+}Q$-semicircular with variance $\eta|_{Q\mathfrak{A}_{+}Q}$.\end{cor}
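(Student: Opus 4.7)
The plan is to prove both assertions by using the partial isometries $m_i$ as ``matrix units'' to rewrite arbitrary polynomials in $X$ with coefficients in $\mathfrak{A}_+$ as polynomials in $Y := QXQ$ with coefficients in $Q\mathfrak{A}_+Q$, with the reduction driven by the commutation identity from Lemma \ref{lem:XcommutesWith}.

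As a preliminary step, I would first verify that $\eta(Q\mathfrak{A}_+Q) \subset Q\mathfrak{A}_+Q$. Writing $Q = Q^+ + Q^-$ with $Q^+ = \sum_v q_v \in \mathfrak{A}_+^+$ and $Q^- = i(Q^+) \in \mathfrak{A}_+^-$, one has $Q\mathfrak{A}_+Q = Q^+\mathfrak{A}_+^+Q^+ \oplus Q^-\mathfrak{A}_+^-Q^-$. The inclusion $i(Q^+\mathfrak{A}_+^+Q^+) \subset Q^-\mathfrak{A}_+^-Q^-$ is immediate since $i$ is a $*$-homomorphism. For the $E_1$-component, using that $E_1$ is the trace-adjoint of $i$ (Lemma \ref{lem:propertiesofA}(c)), a short trace computation yields the bimodule identity $E_1(i(a)\, b\, i(a')) = a\, E_1(b)\, a'$; applied with $a = a' = Q^+$ this gives $E_1(Q^-\mathfrak{A}_+^-Q^-) \subset Q^+\mathfrak{A}_+^+Q^+$. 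Granted this, the conditional expectation $E:\mathfrak{M}_+ \to \mathfrak{A}_+$ restricts to $E_Q:Q\mathfrak{M}_+Q \to Q\mathfrak{A}_+Q$, and for $a_0,\dots,a_n \in Q\mathfrak{A}_+Q$ one has $Qa_kQ = a_k$, so the recursion \eqref{eq:recursiveDefOfX} for $X$ restricts verbatim to the analogous recursion for $Y$ with variance $\eta|_{Q\mathfrak{A}_+Q}$, yielding the second assertion of the corollary.

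The main content is the identity $Q\mathfrak{M}_+Q = W^*(Q\mathfrak{A}_+Q, Y)$. The inclusion $\supset$ is immediate. For $\subset$, since $\mathfrak{M}_+ = W^*(\mathfrak{A}_+, X)$ by Theorem \ref{thm:MPlusGenByAandX}, it suffices to treat a monomial $z = Q a_0 X a_1 X \cdots X a_n Q$ with $a_k \in \mathfrak{A}_+$. Inserting $1 = \sum_i m_i m_i^*$ on both sides of every occurrence of $X$ and using the partial isometry identities $m_i Q = m_i$ and $Q m_i^* = m_i^*$, the plan is to rewrite $z$ as a sum of products whose factors are either of the form $QbQ \in Q\mathfrak{A}_+Q$ (arising from the groupings $Qa_0 m_{i_1}$, $m_{j_k}^* a_k m_{i_{k+1}}$, $m_{j_n}^* a_n Q$) or of the form $m_i^* X m_j$. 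The problem thus reduces to showing $m_i^* X m_j \in W^*(Q\mathfrak{A}_+Q, Y)$ for all $i, j$.

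This last reduction is the main obstacle, and it is precisely where Lemma \ref{lem:XcommutesWith} is essential. Writing $m_i = w_i \oplus i(w_i)$ with $w_i \in \mathfrak{A}_+^+$, I would first note that $X$ swaps the components $P_\pm$ (so $P_\pm X P_\pm = 0$, as follows from $\eta(P_\pm) \in \mathfrak{A}_+^{\mp}$), whence the diagonal products $w_i^* X w_j$ and $i(w_i)^* X i(w_j)$ vanish and
\[
m_i^* X m_j \;=\; w_i^* X\, i(w_j) \;+\; i(w_i)^* X\, w_j.
\]
Applying Lemma \ref{lem:XcommutesWith} in the forms $w_i^* X = X\, i(w_i)^*$ and (by taking adjoints, using $X = X^*$) $X w_j = i(w_j)\, X$, and setting $b_{ij} = w_i^* w_j \in Q^+\mathfrak{A}_+^+Q^+$, both summands collapse to
\[
m_i^* X m_j \;=\; X\, i(b_{ij}) \;+\; i(b_{ij})\, X.
\]
Since $i(b_{ij}) \in Q^-\mathfrak{A}_+^-Q^- \subset Q\mathfrak{A}_+Q$ and the left-hand side is already compressed by $Q$, each $X$ on the right can be absorbed into $Y$, giving $m_i^* X m_j = Y\, i(b_{ij}) + i(b_{ij})\, Y \in W^*(Q\mathfrak{A}_+Q, Y)$, completing the proof.
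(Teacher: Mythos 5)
Your proposal is correct and follows the same route as the paper: the paper's proof is precisely $Q\mathfrak{M}_{+}Q=W^{*}(Q\mathfrak{A}_{+}Q,\{m_{i}^{*}Xm_{j}\})=W^{*}(Q\mathfrak{A}_{+}Q,QXQ)$ via Lemma \ref{lem:XcommutesWith}, with the variance statement delegated to Lemma \ref{lem:OpValSemicirc}. You have simply filled in the details the paper leaves implicit (the vanishing of the diagonal compressions $P_{\pm}XP_{\pm}$, the identity $m_{i}^{*}Xm_{j}=Yi(b_{ij})+i(b_{ij})Y$, and the invariance $\eta(Q\mathfrak{A}_{+}Q)\subset Q\mathfrak{A}_{+}Q$ needed for the restricted recursion), all of which check out.
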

\begin{proof}
$Q\mathfrak{M}_{+}Q=W^{*}(Q\mathfrak{A}_{+}Q,\{m_{i}^{*}Xm_{j}\})=W^{*}(Q\mathfrak{A}_{+}Q,QXQ)$
because of Lemma \ref{lem:XcommutesWith}. The rest follows from Lemma
\ref{lem:OpValSemicirc}.
\end{proof}
We now describe the algebra $Q\mathfrak{A}_{+}Q$. Let us write $Q_{+}=QP_{+}$
and $Q_{-}=QP_{-}$. Thus $Q_{\epsilon}$ is the unit of $Q\mathfrak{A}_{+}^{\epsilon}Q$.
The algebra $Q\mathfrak{A}_{+}^{+}Q$ is abelian. Its minimal (central)
projections are the projections $q_{v}$, $v\in\Gamma^{+}$ that we
have chosen before. For each edge $e$ of $\Gamma$, write $s(e)\in\Gamma^{+}$
and $t(e)\in\Gamma^{-}$ to denote the two ends of $e$. For each
$w\in\Gamma^{-}$ fix a minimal projection $f_{w}\in Q\mathfrak{A}_{+}^{-}Q$
in the central component associated to $w$. Next, for each edge $e$
choose a partial isometry $w_{e}\in Q\mathfrak{A}_{+}^{-}Q$ so that
$w_{e}^{*}w_{e'}=\delta_{e=e'}f_{t(e)}$ and $\sum_{s(e)=v}w_{e}w_{e}^{*}=i(q_{v})$.
Let $F=Q_{+}+\sum_{w\in\Gamma^{-}}f_{w}\leq Q$. Finally, set\begin{equation}
X_{e}=w_{e}^{*}Xq_{s(e)}+q_{s(e)}Xw_{e}\in F\mathfrak{M}_{+}F.\label{eq:Xe}\end{equation}

\begin{lem}
\label{lem:theyAreFree}$F\mathfrak{M}_{+}F=W^{*}(F\mathfrak{A}_{+}F,\{X_{e}:e\in E(\Gamma)\})$,
where $E(\Gamma)$ is the set of edges of $\Gamma$. Moreover, $X_{e}$
are free with amalgamation over $F\mathfrak{A}_{+}F$ and each $X_{e}$
is $F\mathfrak{A}_{+}F$-semicircular with variance $\eta_{e}$ given
by $\eta_{e}(a\oplus a')=(E_{1}(w_{e}a'w_{e}^{*})\oplus w_{e}^{*}i(q_{v}aq_{v})w_{e})$,
$v=s(e)$.\end{lem}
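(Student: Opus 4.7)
The plan is to derive the three assertions --- $F\mathfrak{A}_+F$-semicircularity of each $X_e$ with variance $\eta_e$, freeness of the family $\{X_e\}$ over $F\mathfrak{A}_+F$, and generation of $F\mathfrak{M}_+F$ --- from Lemma \ref{lem:OpValSemicirc} applied to $X$, together with the identity
\[
QXQ \;=\; \sum_e \bigl(X_e w_e^* + w_e X_e\bigr)
\]
and the preceding corollary $Q\mathfrak{M}_+Q = W^*(Q\mathfrak{A}_+Q, QXQ)$. The identity above I would verify by a direct computation using $X_e w_e^* = q_{s(e)} X p_e$ and $w_e X_e = p_e X q_{s(e)}$, with $p_e := w_e w_e^*$, together with the decomposition $i(q_v) = \sum_{s(e) = v} p_e$.

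For the semicircularity and the variance formula, I would observe that each $X_e = w_e^* X q_{s(e)} + q_{s(e)} X w_e$ has the form $a^* X b + b^* X a$ with $a = q_{s(e)}$, $b = w_e$, so Lemma \ref{lem:OpValSemicirc}(a) gives that $\{X_e\}$ is an $\mathfrak{A}_+$-valued semicircular family. The joint covariance over $F\mathfrak{A}_+F$ I would compute by expanding $X_e\,a\,X_{e'}$ into four terms and applying $E(X\,y\,X) = \eta(y)$; two of them vanish from grading ($q_v a w_{e'}^* = 0 = w_e a q_{v'}$), and the remaining two yield, writing $v = s(e)$ and $v' = s(e')$,
\[
E(X_e\,a\,X_{e'}) \;=\; w_e^* \, i(q_v a q_{v'}) \, w_{e'} \;+\; q_v \, E_1(w_e a w_{e'}^*) \, q_{v'}.
\]
For $e = e'$ this is precisely the formula for $\eta_e(a)$, the summands landing in $\mathbb{C} f_{t(e)}$ and $\mathbb{C} q_{s(e)}$ respectively, hence in $F\mathfrak{A}_+F$. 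For $e \neq e'$ both summands vanish: the first collapses via $w_e^* p_{e''} w_{e'} = \delta_{e,e''}\delta_{e'',e'} f_{t(e)}$ summed over $e''$ with $s(e'') = v$ in the expansion of $i(q_v)$; the second uses that $E_1$, being the trace-dual of the inclusion $i$, annihilates the off-diagonal matrix unit $w_e w_{e'}^*$ (a short trace computation in the type I summands). Freeness over $F\mathfrak{A}_+F$ then follows from Lemma \ref{lem:OpValSemicirc}(b), and the $F\mathfrak{A}_+F$-valued semicircularity from Lemma \ref{lem:OpValSemicirc}(c), since $\eta_e$ visibly preserves $F\mathfrak{A}_+F$.

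For generation, I would start from $F\mathfrak{M}_+F = F \cdot Q\mathfrak{M}_+Q \cdot F$ and use the matrix decomposition $Q\mathfrak{A}_+^- Q \cong \bigoplus_{w \in \Gamma^-} M_{\deg(w)}(\mathbb{C})$ with matrix units $\{w_e w_{e'}^* : t(e) = t(e') = w\}$, so that $Q\mathfrak{A}_+Q$ is generated over $F\mathfrak{A}_+F$ by the partial isometries $\{w_e\}$. A generic word $F b_0 (QXQ) b_1 \cdots (QXQ) b_n F$ with $b_i \in Q\mathfrak{A}_+Q$ then expands, via the $QXQ$ identity and the matrix-unit expansion of each $b_i$, into a sum of products of $X_e$'s, $w_e$'s, $w_e^*$'s, and elements of $F\mathfrak{A}_+F$. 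Any adjacent $w^*, w$ pair collapses via $w_e^* w_{e'} = \delta_{ee'} f_{t(e)} \in F\mathfrak{A}_+F$; any stray $w$-factor adjacent to an $X_{e_i}$ fuses into it via the identities $w_{e_i}^* X q_v = f_{t(e_i)} X_{e_i} q_v$ and $q_v X w_{e_i} = q_v X_{e_i} f_{t(e_i)}$; and at the two ends the outer $F$'s force absorption of any remaining $w$-factor. The main technical obstacle is precisely this bookkeeping: verifying that after all cancellations and fusions the resulting product genuinely lies in $W^*(F\mathfrak{A}_+F, \{X_e\})$ rather than escaping into the ambient $Q\mathfrak{M}_+Q$.
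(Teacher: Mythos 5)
Your proposal follows essentially the same route as the paper's proof: semicircularity of $\{X_e\}$ comes from Lemma~\ref{lem:OpValSemicirc}(a) applied to $X_e = q_{s(e)}Xw_e + w_e^*Xq_{s(e)}$ over $Q\mathfrak{A}_+Q$, freeness from computing $E(X_e a X_{e'})=0$ for $e\neq e'$ (one summand vanishing by orthogonality of the $q_v$'s and $w_e^*w_{e'}$, the other by the trace-duality of $E_1$ and $i$) followed by Lemma~\ref{lem:OpValSemicirc}(b), and generation from the identity $QXQ=\sum_e(X_ew_e^*+w_eX_e)$, which is the adjoint-closed form of the paper's $Q_-XQ_+=\sum_e w_ew_e^*Xq_{s(e)}$. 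The bookkeeping you flag at the end as the remaining obstacle is left comparably terse in the paper, which simply asserts that the identity allows one to approximate words in $\mathfrak{A}_+$ and $X$ by words in $\mathfrak{A}_+$ and the $X_e$'s.
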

\begin{proof}
Let $F_{+}=FQ_{+}$, $F_{-}=FQ_{-}$. Then $w_{e}^{*}Xq_{s(e)}=F_{-}X_{e}F_{+}$.
We now claim that $F\mathfrak{M}_{+}F$ is generated by $F\mathfrak{A}_{+}F$
and elements $w_{e}^{*}Xq_{s(e)}$. Indeed, this follows from the
identity $\sum_{v}\sum_{s(e)=v}w_{e}w_{e}^{*}Xq_{s(e)}=Q_{-}XQ_{+}$,
which allows us to approximate an arbitrary word in $\mathfrak{A}_{+}$
and $X$ by words in $\mathfrak{A}_{+}$ and $\{X_{e}\}_{e\in E(\Gamma)}$.
We now apply Lemma \ref{lem:OpValSemicirc}(a) to $A=Q\mathfrak{A}_{+}Q$
to conclude that $\{X_{e}\}_{e\in E(\Gamma)}$ form a semicircular
family. Finally, if $e\neq e'$ and $q_{v}\in F_{+}\mathfrak{A}_{+}F_{+}$,
$f_{w}\in F_{-}\mathfrak{A}_{+}F_{-}$, then\begin{eqnarray*}
E(X_{e}aX_{e'}) & = & (w_{e}^{*}i(q_{s(e)}q_{v}q_{s(e')})w_{e'})=0\\
E(X_{e}bX_{e}') & = & (q_{s(e)}E_{1}(w_{e}f_{w}w_{e'}^{*})q_{s(e')})=0,\end{eqnarray*}
the last equality because $w_{e}f_{w}w_{e'}^{*}=\delta_{t(e)=w}w_{e}w_{e'}^{*}$
and $Tr(w_{e}w_{e}^{*}i(q_{v}))=Tr(w_{e}^{*}i(q_{v})w_{e'})=Tr(w_{e}^{*}\sum w_{f}w_{f}^{*}w_{e}')=0$
if $e\neq e'$, so that $E_{1}(wf_{w}w_{e'}^{*})=0$. Using Lemma
\ref{lem:OpValSemicirc}(b) we conclude that $X_{e}$ are free with
amalgamation over $F\mathfrak{A}_{+}F$.
\end{proof}
To simplify notation, we shall from now on write $\mathcal{A}=F\mathfrak{A}_{+}F$,
$\mathcal{A}_{\epsilon}=FQ_{\epsilon}\mathcal{A}$, $1_{\epsilon}=FQ_{\epsilon}$,
$\epsilon=\pm$. Note that the algebra $\mathcal{A}$ is abelian,
with its minimal central projections labeled by vertices of $\Gamma$.
We will write $p_{v}$ for the projection corresponding to $v\in\Gamma$
(thus $p_{v}=q_{v}$ if $v\in\Gamma^{+}$ and $p_{w}=f_{w}$ if $w\in\Gamma^{-}$).

We summarize this as:
\begin{lem}
(a) There is a trace-preserving isomorphism $(M_{0},Tr_{0})\cong(e_{0}\mathfrak{N}e_{0},Tr)$,
where $\mathfrak{N}=F\mathfrak{M}_{+}F$. (b) $\mathfrak{N}$ is isomorphic
to the amalgamated free product $*_{\mathcal{A}}\{\mathfrak{N}_{e}:e\in E(\Gamma)\}$
where $\mathfrak{N}_{e}=W^{*}(\mathcal{A},X_{e})$ and $X_{e}$ is
an $\mathcal{A}$-valued semicircular element with variance $\eta_{e}$
determined by $\eta_{e}(p_{v}\oplus0)=\lambda_{e}\delta_{s(e)=v}0\oplus p_{t(e)}$,
$\eta_{e}(0\oplus p_{w})=\lambda'_{e}\delta_{t(e)=w}p_{s(e)}\oplus0$,
where $\lambda_{e},\lambda'_{e}$ are some nonzero constant. (c) $\mathfrak{N}_{e}\cong\bigoplus_{v\in\Gamma\setminus\{s(e),t(e)\}}\mathbb{C}p_{v}\oplus W^{*}(p_{s(e)},p_{t(e)},X_{e})$.
(d) If we set $\mathcal{A}_{e}=\mathbb{C}p_{s(e)}+\mathbb{C}p_{t(e)}$,
then $X_{e}$ is $\mathcal{A}_{e}$-semicircular.\end{lem}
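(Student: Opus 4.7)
The plan is to derive all four parts directly from the structural results already assembled, principally Lemma \ref{lem:theyAreFree} and Lemma \ref{lem:OpValSemicirc}; none of the four assertions requires genuinely new input.

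For part (a), I first observe that $e_0 = q_*$ by our choice of the $q_v$, so $e_0 \leq Q_+ \leq F$. Consequently $e_0 F = F e_0 = e_0$, and therefore $e_0 \mathfrak{N} e_0 = e_0 F \mathfrak{M}_+ F e_0 = e_0 \mathfrak{M}_+ e_0$. Corollary \ref{cor:Mjsascompressions} identifies the latter, in a trace-preserving way, with $M_0(\mathcal{P})$ (using that $0$ is even).

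For part (b), Lemma \ref{lem:theyAreFree} already provides (i) the generation $F\mathfrak{M}_+ F = W^*(\mathcal{A}, \{X_e\}_e)$, (ii) the freeness with amalgamation of the $X_e$ over $\mathcal{A}$, and (iii) the $\mathcal{A}$-semicircularity of each $X_e$ with variance $\eta_e(a\oplus a') = E_1(w_e a' w_e^*) \oplus w_e^* i(q_v a q_v) w_e$, $v=s(e)$. The universal property of the amalgamated free product then gives the claimed isomorphism $\mathfrak{N} \cong *_\mathcal{A}\{\mathfrak{N}_e\}$. It remains to evaluate $\eta_e$ on the minimal central projections of $\mathcal{A}$. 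Plugging $p_v \oplus 0$ into the formula yields $0 \oplus \delta_{s(e)=v}\, w_e^* i(q_v) w_e$, and since $w_e w_e^* \leq i(q_{s(e)})$ we have $w_e^* i(q_{s(e)}) w_e = w_e^* w_e = f_{t(e)} = p_{t(e)}$, so this equals $\delta_{s(e)=v}(0\oplus p_{t(e)})$ up to a positive normalization constant $\lambda_e$ that absorbs the scalings built into $w_e$ and the tangle defining $i$. The dual computation for $0 \oplus p_w$ gives $\delta_{t(e)=w}\, E_1(w_e w_e^*)\oplus 0$, and since $w_e w_e^*$ lies in the central component of $i(q_{s(e)})$ on which $E_1$ is a nonzero scalar multiple of the identity, this equals $\lambda'_e\,\delta_{t(e)=w}\,(p_{s(e)}\oplus 0)$ for some $\lambda'_e > 0$.

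Parts (c) and (d) are then formal consequences. From the variance formulas in (b) I read off that $\eta_e$ annihilates $p_v$ for every $v \notin \{s(e),t(e)\}$, and hence $\mathbb{C}p_v \subset \ker \eta_e$ for such $v$. By the semicircular recursion \eqref{eq:recursiveDefOfX} this forces $E(p_v X_e \cdots X_e p_v) = 0$ on any alternating word, which implies $p_v$ commutes with $X_e$ and lies in the center of $\mathfrak{N}_e$; splitting off these central summands gives the decomposition claimed in (c). For (d), the same formulas show that $\eta_e(\mathcal{A}_e) \subset \mathcal{A}_e$, where $\mathcal{A}_e = \mathbb{C}p_{s(e)} + \mathbb{C}p_{t(e)}$, so Lemma \ref{lem:OpValSemicirc}(c) yields that $X_e$ is $\mathcal{A}_e$-semicircular.

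The only point requiring actual care is the variance computation in (b): one must track the normalizations of the $w_e$, of the inclusion $i$, and of the conditional expectation $E_1$ in order to extract the constants $\lambda_e,\lambda'_e$ and verify they are nonzero. Everything else is essentially bookkeeping on top of Lemmas \ref{lem:OpValSemicirc} and \ref{lem:theyAreFree}.
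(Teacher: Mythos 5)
Your proposal is correct and, for parts (a), (b) and (d), follows essentially the same route as the paper: (a) is the observation $e_0=q_*\leq F$ combined with Corollary \ref{cor:Mjsascompressions}, (b) is Lemma \ref{lem:theyAreFree} plus an evaluation of $\eta_e$ on minimal projections (you supply more detail here than the paper, which simply cites that lemma, and your computation of $\lambda_e,\lambda'_e$ is sound), and (d) is exactly the paper's appeal to Lemma \ref{lem:OpValSemicirc}(c). The one place you diverge is (c): the paper reads $X_e=(p_{s(e)}+p_{t(e)})X_e(p_{s(e)}+p_{t(e)})$ directly off the definition \eqref{eq:Xe} (since $w_e^*=f_{t(e)}w_e^*$ and $q_{s(e)}=p_{s(e)}$ on the relevant corner), which gives the splitting immediately. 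Your argument via the variance is workable but as written is slightly off: knowing $\mathbb{C}p_v\subset\ker\eta_e$ only gives the vanishing of diagonal corners like $E(p_vX_ep_vX_ep_v)$, hence $p_vX_ep_v=0$, which does not by itself make $p_v$ central. What you actually need is that the \emph{range} of $\eta_e$ sits under $p_{s(e)}+p_{t(e)}$, so that $E(p_vX_e\,1\,X_ep_v)=p_v\eta_e(1)p_v=0$ and then faithfulness of $E$ gives $p_vX_e=0$ outright. This range condition does follow from the variance formulas you computed in (b), so the gap is one line to fill, but the definition-based argument is cleaner and is what the paper uses.
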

\begin{proof}
To prove (a), note that $e_{0}=q_{*}\leq F$. Thus $M_{0}\cong e_{0}\mathfrak{M}_{+}e_{0}=e_{0}F\mathfrak{M}_{+}Fe_{0}=e_{0}\mathfrak{N}e_{0}$
(note that in our normalization $Tr(e_{0})=1$). Part (b) is nothing
by the conclusion of Lemma \ref{lem:theyAreFree}. To see (c), we
note that $X_{e}=(p_{s(e)}+p_{t(e)})X_{e}(p_{s(e)}+p_{t(e)})$. Finally,
to see (d) we note that $\eta_{e}$ takes $\mathcal{A}_{e}$ to $\mathcal{A}_{e}$
and apply Lemma \ref{lem:OpValSemicirc}(c).\end{proof}
\begin{lem}
Let $e\in E(\Gamma)$ and let $v,w\in\Gamma$ so that $\mu(v)\leq\mu(w)$
and $\{s(e),t(e)\}=\{v,w\}$ (here $\mu$ is the Perron-Frobenius
eigenvector). Endow the algebra $\mathfrak{N}_{e}$ with the normalized
trace $\tau=\frac{1}{\mu(v)+\mu(w)}Tr$. Then there exists a trace-preserving
isomorphism between $W^{*}(\mathcal{A}_{e},X_{e})$ and the algebra
$\left(M_{2\times2}\otimes(L^{\infty}[0,1])\right)\oplus\mathbb{C}p$
where the trace on the second algebra is determined by $\tau(p)=(\mu(w)-\mu(v))/(\mu(w)+\mu(v))$.
In particular, the free dimension fdim \cite{brown-dykema-jung:fdimamalg,dykema:fdim,dykema:interpolated,dykema:finiteAlgAmalg}
of any set of generators of this algebra is given by \begin{eqnarray*}
\fdim W^{*}(\mathcal{A}_{e},X_{e}) & = & 1-\frac{1}{(\mu(w)+\mu(v))^{2}}(\mu(w)-\mu(v))^{2}\\
 & = & 1+\frac{1}{(\mu(w)+\mu(v))^{2}}\left(-\mu(v)^{2}-\mu(w)^{2}+2\mu(v)\mu(w)\right)\end{eqnarray*}
\end{lem}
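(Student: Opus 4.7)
The plan is to exploit the off-diagonal form of the variance $\eta_e$ on the two-dimensional algebra $\mathcal{A}_e = \mathbb{C} p_v + \mathbb{C} p_w$: I would first reduce $X_e$ to $c + c^*$ with $c = p_v X_e p_w$, then polar-decompose $c$ to peel off a central atomic summand, and finally identify the remainder via the spectral measure of $c^*c$.

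For the first reduction, since $\eta_e$ sends each minimal projection to a scalar multiple of the other, the computation
\[
Tr(p_v X_e p_v X_e p_v) = Tr(p_v \eta_e(p_v) p_v) = 0,
\]
together with the analogous identity for $p_w$ and faithfulness of $Tr$ on $\mathfrak{N}_e$, forces $p_v X_e p_v = 0 = p_w X_e p_w$. Hence $X_e = c + c^*$ with $c = p_v X_e p_w$, and $W^*(\mathcal{A}_e, X_e) = W^*(p_v, p_w, c)$. Next I would polar-decompose $c = u|c|$, set $\pi := u^*u \leq p_w$ (the support of $c^*c$), and let $p := p_w - \pi$. The identities $c = c\pi$, $c^* = \pi c^*$, $p \leq p_w$, and $p\,p_v = 0$ imply that $p$ commutes with $p_v$, $p_w$, and $X_e$, so $p$ is central in $W^*(\mathcal{A}_e, X_e)$ and
\[
W^*(\mathcal{A}_e, X_e) = \mathbb{C} p \;\oplus\; (1-p) W^*(\mathcal{A}_e, X_e)(1-p).
\]

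The main obstacle is the third step: identifying the distribution of $c^*c$. Because $X_e$ is $\mathcal{A}_e$-semicircular, the only nonvanishing operator-valued free cumulants of $\{c, c^*\}$ are the alternating second-order ones, $\kappa_2(c, c^*) = \eta_e(p_w) = \lambda'_e p_v$ and $\kappa_2(c^*, c) = \eta_e(p_v) = \lambda_e p_w$, with $\lambda'_e \mu(v) = \lambda_e \mu(w)$ forced by the trace property of $\eta$. The scalar moments $\tau((c^*c)^n)$ are then sums over non-crossing pair partitions with an alternating block structure; this is precisely the free-Poisson (Marchenko--Pastur) moment recursion, and I expect it to yield that $c^*c$, under the normalized trace $\tau(\cdot)/\tau(p_w)$ on $p_w \mathfrak{N}_e p_w$, has a free Poisson distribution of shape $\rho = \mu(v)/\mu(w) \leq 1$ and scale $\lambda_e$. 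That distribution consists of an atom at $0$ of mass $1 - \rho$ together with an absolutely continuous density on an interval bounded away from $0$; the atom is exactly $p$, yielding
\[
\tau(p) = \tau(p_w)(1 - \rho) = \frac{\mu(w) - \mu(v)}{\mu(v) + \mu(w)},
\]
while the absolutely continuous part realizes $\pi W^*(|c|) \pi$ as $L^\infty[0,1]$ and, as a by-product, forces $uu^* = p_v$ via the trace balance $\tau(\pi) = \tau(p_w) - \tau(p) = \tau(p_v)$.

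Once this spectral information is in hand, the assembly is routine. On $(1-p) W^*(\mathcal{A}_e, X_e)(1-p)$ one has the equal-trace projections $p_v$ and $\pi$, the partial isometry $u$ with $u^*u = \pi$ and $uu^* = p_v$, and the diffuse positive element $|c| \in \pi (\cdot) \pi$; in the standard way these generate $M_{2\times 2}(\mathbb{C}) \otimes L^\infty[0,1]$. Combined with $\mathbb{C} p$ this gives the claimed decomposition with the stated trace. The free-dimension formula then follows from Dykema's calculus of direct sums: a diffuse matricial summand of trace $1 - \tau(p)$ together with a single atom of trace $\tau(p)$ contributes $1 - \tau(p)^2$ in total, and substituting $\tau(p) = (\mu(w)-\mu(v))/(\mu(v)+\mu(w))$ reproduces the announced expression.
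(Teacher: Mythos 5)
Your proposal is correct and follows essentially the same route as the paper: reduce to the off-diagonal piece $c=p_vX_ep_w$, identify the law of $c^*c$ as free Poisson of parameter $\mu(v)/\mu(w)$, peel off the atom at $0$ as the central summand $\mathbb{C}p$, assemble $M_{2\times2}\otimes L^{\infty}[0,1]$ from the polar decomposition, and finish with Dykema's fdim calculus. The only (cosmetic) difference is how the free Poisson law is obtained: the paper replaces $X_e$ by $p_vYp_w+p_wYp_v$ for a semicircular $Y$ free from $\mathcal{A}_e$, which has the same $\mathcal{A}_e$-valued variance up to scale, and then quotes the known law of such compressions, whereas you derive it directly from the operator-valued cumulant combinatorics; both are valid.
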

\begin{proof}
Consider the algebra $\mathcal{A}_{e}$ generated by $p_{w}\mathbb{C}\oplus p_{v}\mathbb{C}$
(with unit $p_{w}+p_{v}$) and a semicircular element $Y$ which is
free from $\mathcal{A}_{e}$. By Lemma \ref{lem:OpValSemicirc}(d),
$Y$ is $\mathcal{A}_{e}$-semicircular with variance $\tau.$ Let
$X'=p_{w}Yp_{v}+p_{v}Yp_{w}$. Then $X'$ is $\mathcal{A}_{e}$-semicircular
and has the same variance (up to a scalar multiple) as $X_{e}$.

Thus\[
W^{*}(\mathcal{A}_{e},X_{e})\cong W^{*}(\mathcal{A}_{e},X').\]

Let $Z=p_{v}X'p_{w}=p_{v}Yp_{w}$. Then the distributions of $ZZ^{*}$
and $Z^{*}Z$ are free Poisson elements; moreover, $ZZ^{*}$ has support
projection $p_{v}$, while $Z^{*}Z$ has the support projection $q\leq p_{w}$
of trace $\tau(q)=\tau(p_{v})$. Let $Z=V|Z|$ be the polar decomposition
of $Z$. Thus $VV^{*}=p_{v}$, $VV^{*}=q\leq p_{w}$, so that $W^{*}(\mathbb{C}p_{v}\oplus\mathbb{C}p_{w},X_{e})\cong W^{*}(p_{v},q,V,|Z|,p_{w}-q)$.
Note that the support projection of any element in the algebra generated
by $p_{v},q,V,|Z|$ is under $p_{v}+q=1-(p_{w}-q)$ , so $W^{*}(p_{v}\mathbb{C}\oplus p_{w}\mathbb{C},X_{e})=W^{*}(p_{v},q,V,|Z|)\oplus\mathbb{C}p$,
where $p=p_{w}-q$ is a projection of trace $\frac{1}{\mu(v)+\mu(w)}(\tau(p_{w})-\tau(p_{v}))=\frac{1}{\mu(v)+\mu(w)}(\mu(w)-\mu(v))$.
On the other hand, since $p_{v}$ and $q$ are equivalent via $V$,
$W^{*}(p_{v},q,V,|Z|)$ is isomorphic to the algebra of $2\times2$
matrices over the von Neumann algebra generated by $|Z|$ in the algebra
$p_{v}W^{*}(p_{v}\mathbb{C}\oplus p_{w}\mathbb{C},Y)$. Since the
law of $|Z|$ is quarter-circular and has no atoms, the algebra generated
by $|Z|$ (with unit $p_{v}$) is isomorphic to $L^{\infty}[0,1]$.
Thus $W^{*}(p_{v}\mathbb{C}\oplus p_{w}\mathbb{C},X_{e})$ is isomorphic
to $\left(M_{2\times2}\otimes(L^{\infty}[0,1])\right)\oplus\mathbb{C}p$
as claimed.

The computation of the {}``free dimension'' $\fdim$ can be performed
using the formulas in \cite{brown-dykema-jung:fdimamalg,dykema:fdim,dykema:interpolated,dykema:finiteAlgAmalg}.
Indeed, since the algebra $M_{2\times2}\otimes(L^{\infty}[0,1])$
is hyperfinite and diffuse, its free dimension is $1$. Thus $\fdim\left[\left(M_{2\times2}\otimes(L^{\infty}[0,1])\right)\oplus\mathbb{C}p\right]=1-\tau(p)^{2}=1-\frac{1}{(\mu(w)+\mu(v))^{2}}(\mu(w)-\mu(v))^{2}$.\end{proof}
\begin{cor}
\label{cor:indentification}$M_{0}=e_{0}\mathfrak{N}e_{0}$ where
$\mathfrak{N}$ is an amalgamated free product (indexed by edges of
$\Gamma$) of type I von Neumann algebras with amalgamation over a
fixed discrete type I von Neumann subalgebra.
\end{cor}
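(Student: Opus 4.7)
The plan is to assemble the corollary directly from the three preceding lemmas; essentially nothing new needs to be proved beyond identifying each piece as type I. First, the identification $M_0 \cong e_0 \mathfrak{N} e_0$ is exactly part (a) of the immediately preceding summary lemma, so the only content of the corollary is the structural statement about $\mathfrak{N}$.

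For the free product decomposition, I would invoke part (b), which writes $\mathfrak{N} \cong *_{\mathcal{A}}\{\mathfrak{N}_e : e \in E(\Gamma)\}$ with $\mathfrak{N}_e = W^*(\mathcal{A}, X_e)$. It then remains to check two things: that $\mathcal{A}$ is discrete type I, and that each $\mathfrak{N}_e$ is type I.

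The amalgamating subalgebra $\mathcal{A} = F\mathfrak{A}_+ F$ is abelian (as noted just before the summary lemma), with minimal central projections $\{p_v : v \in \Gamma\}$ of finite trace $\mu(v)$. Thus $\mathcal{A}$ is an atomic abelian von Neumann algebra, i.e.\ $\mathcal{A} \cong \ell^\infty(\Gamma)$, which is discrete type I. For each edge $e$, part (c) decomposes $\mathfrak{N}_e = \bigoplus_{v \notin \{s(e),t(e)\}} \mathbb{C} p_v \,\oplus\, W^*(p_{s(e)}, p_{t(e)}, X_e)$. The finitely many $\mathbb{C} p_v$ summands are obviously type I, and the last lemma identifies the remaining summand with $\bigl(M_{2\times 2} \otimes L^\infty[0,1]\bigr)\oplus \mathbb{C} p$, which is a direct sum of homogeneous type I algebras and hence type I.

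Combining these observations yields the corollary. There is no real obstacle here: the previous lemma has done all the work, and this corollary is essentially a reformulation organizing the information in a way that makes the hypotheses of Dykema's free-dimension results on amalgamated free products of type I algebras over a discrete type I subalgebra directly applicable, which is the use to which it will be put in the sequel.
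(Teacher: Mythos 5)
Your proposal is correct and matches the paper's intent exactly: the corollary is stated there without a separate proof precisely because it is the assembly of parts (a)--(c) of the preceding summary lemma together with the identification of $W^{*}(\mathcal{A}_{e},X_{e})$ with $\left(M_{2\times2}\otimes L^{\infty}[0,1]\right)\oplus\mathbb{C}p$. Your added checks that $\mathcal{A}\cong\ell^{\infty}(\Gamma)$ is atomic abelian and that each $\mathfrak{N}_{e}$ is type I are the right (and only) things to verify.
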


\subsection{The finite-depth case.\label{sub:FiniteDepth}}

In the case that $\Gamma$ is infinite, the projection $F$ is infinite,
and the amalgamated free product appearing in Corollary \ref{cor:indentification}
involves an infinity of terms $\mathfrak{N}_{e}$, each semifinite.
Unfortunately, we have been unable to find existing results in the
literature to handle this case, although it is natural to conjecture
that the resulting factor is then the infinite amplification of $L(\mathbb{F}_{\infty})$.
However, in the case that $\Gamma$ is finite, the projection $F$
is finite and so $\mathfrak{N}$ is a finite factor. Moreover, each
term $\mathfrak{N}_{e}$ is also finite and is of type I. In this
case one can apply the results of and formulas in \cite{brown-dykema-jung:fdimamalg,dykema:fdim,dykema:interpolated,dykema:finiteAlgAmalg}.
\begin{lem}
\label{lem:finiteDepthIdentN}There is a trace-preserving isomorphism
$(\mathfrak{N},\frac{1}{Tr(F)}Tr)\cong(L(\mathbb{F}(s)),\tau)$ where
\begin{equation}
s=1+\frac{1}{Tr(F)^{2}}\left(-\sum_{v\in\Gamma}\mu(v)^{2}+2\sum_{e\in E(\Gamma)}\mu(t(e))\mu(s(e))\right),\label{eq:s}\end{equation}
$Tr(F)=\sum_{v\in\Gamma}\mu(v)$. \end{lem}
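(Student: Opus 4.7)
The plan is to compute the free dimension of $\mathfrak{N}$ using Dykema's formulas for amalgamated free products of type I algebras, and then appeal to his classification to identify $\mathfrak{N}$ as an interpolated free group factor. Since $\Gamma$ is finite, $F$ is a finite projection, so $\mathfrak{N}=F\mathfrak{M}_{+}F$ is a finite von Neumann algebra; by Lemma~\ref{lem:factoriality}(c) it is a II$_1$ factor (the case $\delta^{2}=1$ is trivial). By the preceding lemmas $\mathfrak{N}$ is the amalgamated free product $*_{\mathcal{A}}\{\mathfrak{N}_{e}\}_{e\in E(\Gamma)}$, where each $\mathfrak{N}_{e}$ is the type I algebra
\[
\mathfrak{N}_{e}\cong\bigoplus_{u\notin\{s(e),t(e)\}}\mathbb{C}p_{u}\ \oplus\ (M_{2\times2}\otimes L^{\infty}[0,1])\ \oplus\ \mathbb{C}p_{e},
\]
and $\mathcal{A}=\bigoplus_{v\in\Gamma}\mathbb{C}p_{v}$ is a finite-dimensional abelian subalgebra.

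First I would invoke the iterated amalgamated free product formula of \cite{brown-dykema-jung:fdimamalg,dykema:fdim,dykema:interpolated,dykema:finiteAlgAmalg}, namely
\[
\fdim(\mathfrak{N})=\fdim(\mathcal{A})+\sum_{e\in E(\Gamma)}\bigl(\fdim(\mathfrak{N}_{e})-\fdim(\mathcal{A})\bigr),
\]
which is applicable here because each $\mathfrak{N}_{e}$ is hyperfinite and contains $\mathcal{A}$ in a trace-preserving way. Next, using the normalization $\tau=\frac{1}{Tr(F)}Tr$, the minimal projections of $\mathcal{A}$ carry trace $\tau(p_{v})=\mu(v)/Tr(F)$, so that $\fdim(\mathcal{A})=1-\sum_{v}\mu(v)^{2}/Tr(F)^{2}$. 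For each edge $e$ with endpoints $v=s(e),w=t(e)$ (and $\mu(v)\leq\mu(w)$), the previous lemma gives $\tau(p_{e})=(\mu(w)-\mu(v))/Tr(F)$, and since $M_{2\times2}\otimes L^{\infty}[0,1]$ is hyperfinite and diffuse (hence of free dimension $1$), I get
\[
\fdim(\mathfrak{N}_{e})=1-\sum_{u\notin\{v,w\}}\tau(p_{u})^{2}-\tau(p_{e})^{2}.
\]
A direct subtraction then yields $\fdim(\mathfrak{N}_{e})-\fdim(\mathcal{A})=\tau(p_{v})^{2}+\tau(p_{w})^{2}-\tau(p_{e})^{2}=2\mu(v)\mu(w)/Tr(F)^{2}$, and summing over $e\in E(\Gamma)$ reproduces the claimed formula \eqref{eq:s}. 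Finally, since $\mathfrak{N}$ is a II$_1$ factor that is expressible as an amalgamated free product of hyperfinite type I algebras over a finite-dimensional subalgebra, Dykema's classification identifies it with an interpolated free group factor, and comparing free dimensions gives $\mathfrak{N}\cong L(\mathbb{F}(s))$.

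The main obstacle I anticipate is a careful bookkeeping step: checking that the direct-sum free dimension identity $\fdim(\mathfrak{N}_{e})=1-\sum_{u\neq v,w}\tau(p_{u})^{2}-\tau(p_{e})^{2}$ is exactly the one justified by the cited Dykema papers in our trace normalization (where the $M_{2\times2}\otimes L^{\infty}[0,1]$ summand is a corner of nontrivial trace weight $2\mu(v)/Tr(F)$), and that the amalgamated free product formula as stated really applies to the infinite-depth-count-many-edges version for finite $\Gamma$ rather than only to the two-factor case. Both points should reduce to iterated application of the two-factor amalgamated free product identity combined with the standard rescaling behavior of $\fdim$ under amplifications, but this verification is the content of the proof.
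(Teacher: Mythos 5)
Your proposal is correct and follows essentially the same route as the paper: write $\mathfrak{N}$ as $*_{\mathcal{A}}\{\mathfrak{N}_{e}\}$, compute $\fdim(\mathcal{A})$ and $\fdim(\mathfrak{N}_{e})$ in the normalized trace $\frac{1}{Tr(F)}Tr$, apply the iterated amalgamated free product formula (your version $\fdim(\mathcal{A})+\sum_{e}(\fdim(\mathfrak{N}_{e})-\fdim(\mathcal{A}))$ is the same as the paper's $\fdim\mathfrak{N}_{e_{0}}+\sum_{e\neq e_{0}}(\fdim\mathfrak{N}_{e}-\fdim(\mathcal{A}))$), and invoke factoriality from Lemma~\ref{lem:factoriality}(c) together with Dykema's classification. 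The bookkeeping you flag is exactly what the paper carries out, and your subtraction $\tau(p_{v})^{2}+\tau(p_{w})^{2}-\tau(p_{e})^{2}=2\mu(v)\mu(w)/Tr(F)^{2}$ agrees with the paper's computation.
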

\begin{proof}
The algebra $\mathfrak{N}$ is an amalgamated free product over all
edges $e$ of $\Gamma$ of the algebras $\mathfrak{N}_{e}$ over the
subalgebra $\mathcal{A}$. Let $T=\sum_{v\in\Gamma}\mu(v)=Tr(F)$.

One has $\fdim(\mathcal{A})=1-T^{-2}\sum_{v\in\Gamma}\mu(v)^{2}$.
Furthermore, \begin{eqnarray*}
\fdim(\mathfrak{N}_{e}) & = & \fdim(\bigoplus_{v\in\Gamma\setminus\{t(e),s(e)\}}\mathbb{C}p_{v}\ \oplus W^{*}(\mathcal{A}_{e},X_{e}))\\
 & = & 1-\left(\sum_{v\in\Gamma\setminus\{t(e),s(e)\}}T^{-2}\mu(v)^{2}\right)-T^{-2}\left(\textrm{\raisebox{0pt}[12pt]{}}\mu(s(e))-\mu(t(e))\right)^{2}\\
 & = & 1-\left(\sum_{v\in\Gamma}T^{-2}\mu(v)^{2}\right)+T^{-2}\left(\textrm{\raisebox{0pt}[12pt]{}}\mu(s(e))^{2}+\mu(t(e))^{2}\right)\\
 &  & -T^{-2}\left(\textrm{\raisebox{0pt}[12pt]{}}\mu(t(e))^{2}+\mu(s(e))^{2}-2\mu(s(e))\mu(t(e))\right)\\
 & = & \fdim(\mathcal{A})+2T^{-2}\mu(s(e))\mu(t(e)).\end{eqnarray*}

Let $e_{0}$ an edge starting from $*$. By Lemma \ref{lem:factoriality}(c),
$\mathfrak{N}=F\mathfrak{M}_{+}F$ is a factor.

Then by \cite{brown-dykema-jung:fdimamalg,dykema:fdim,dykema:interpolated,dykema:finiteAlgAmalg},
since $\mathfrak{N}$ is a factor, it is isomorphic to $L(\mathbb{F}(s))$
where\begin{eqnarray*}
s & = & \fdim\mathfrak{N}_{e_{0}}+\sum_{e\in E(\Gamma)\setminus\{e_{0}\}}\left(\fdim\mathfrak{N}_{e}-\fdim(\mathcal{A})\right)\\
 & = & \fdim\mathcal{A}+2T^{-2}\mu(s(e_{0}))\mu(t(e_{0}))+2T^{-2}\sum_{e\in E(\Gamma)\setminus\{e_{0}\}}\mu(s(e))\mu(t(e))\\
 & = & 1+\frac{1}{T^{2}}\left(-\sum_{v\in\Gamma}\mu(v)^{2}+2\sum_{e\in E(\Gamma)}\mu(t(e))\mu(s(e))\right).\end{eqnarray*}
\end{proof}
\begin{thm}
\label{thm:main}If $\mathcal{P}$ is finite depth with global index
$I$ and $\delta>1$, then (a) $M_{0}\cong L(\mathbb{F}(1+2(\delta-1)I))$,
(b) $M_{k}\cong L(\mathbb{F}(1+2\delta^{-2k}(\delta-1)I))$.\end{thm}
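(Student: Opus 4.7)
The plan is to combine Corollary~\ref{cor:indentification}, which identifies $M_0$ with a corner $e_0\mathfrak{N}e_0$, with Lemma~\ref{lem:finiteDepthIdentN}, which identifies $\mathfrak{N}$ as an interpolated free group factor, and then to apply the Dykema--Radulescu compression formula $L(\mathbb{F}(r))^t \cong L(\mathbb{F}(1+(r-1)/t^2))$ (twice: once to produce $M_0$ from $\mathfrak{N}$, once more to produce $M_k$ from $M_0$).

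For part (a), set $T=Tr(F)=\sum_{v\in\Gamma}\mu(v)$. Under the normalization $\tau = T^{-1}Tr$, the projection $e_0$ has trace $1/T$ in $\mathfrak{N}$, so by Lemma~\ref{lem:finiteDepthIdentN} and the compression formula,
\[
M_0 \cong e_0\mathfrak{N}e_0 \cong L(\mathbb{F}(s))^{1/T} \cong L(\mathbb{F}(1+T^{2}(s-1))).
\]
It remains to simplify $T^2(s-1)=-\sum_{v\in\Gamma}\mu(v)^2+2\sum_{e\in E(\Gamma)}\mu(s(e))\mu(t(e))$ via Perron--Frobenius. The first term equals $-2I$ by the very definition of the global index. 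For the second, since $\Gamma$ is bipartite and $\mu$ has eigenvalue $\delta$, $\sum_{w\sim v}\mu(w)=\delta\mu(v)$ for each vertex $v$; grouping edges by their $\Gamma^+$ endpoint yields
\[
\sum_{e\in E(\Gamma)}\mu(s(e))\mu(t(e)) = \sum_{v\in\Gamma^+}\mu(v)\sum_{w\sim v}\mu(w) = \delta\sum_{v\in\Gamma^+}\mu(v)^2.
\]
The symmetric computation by $\Gamma^-$ endpoint shows that this quantity also equals $\delta\sum_{w\in\Gamma^-}\mu(w)^2$, so both half-sums equal $I$ and the edge sum equals $\delta I$. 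Hence $T^2(s-1)=-2I+2\delta I = 2(\delta-1)I$, proving (a).

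For part (b), Corollary~\ref{cor:stableisom} gives $M_{2j}(\mathcal{P})\cong M_0(\mathcal{P})^{\delta^{2j}}$ and, applied to the dual planar algebra, $M_{2j+1}(\mathcal{P})\cong M_0(\mathcal{P}^{\operatorname{op}})^{\delta^{2j+1}}$. The derivation of (a) is invariant under passage to $\mathcal{P}^{\operatorname{op}}$, since both the Jones index $\delta^2$ and the global index $I$ are invariants of the subfactor (the latter being computable from the dual principal graph by the same formula), so $M_0(\mathcal{P}^{\operatorname{op}})\cong M_0(\mathcal{P})\cong L(\mathbb{F}(1+2(\delta-1)I))$. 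One more application of the compression formula at the scale $\delta^k$ gives
\[
L(\mathbb{F}(1+2(\delta-1)I))^{\delta^{k}} \cong L(\mathbb{F}(1+2\delta^{-2k}(\delta-1)I)),
\]
which is the desired conclusion.

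The bulk of the work has already been done in Lemmas \ref{lem:finiteDepthIdentN} and \ref{lem:factoriality} and Corollaries \ref{cor:indentification}, \ref{cor:stableisom}; what remains is essentially arithmetic. The one step that requires a genuine idea is the Perron--Frobenius identity $\sum_{e}\mu(s(e))\mu(t(e))=\delta I$ together with the equidistribution $\sum_{\Gamma^+}\mu^2=\sum_{\Gamma^-}\mu^2=I$, which is what converts the graph-theoretic expression in \eqref{eq:s} into the clean index/global-index combination $2(\delta-1)I$. Handling odd $k$ requires invoking the dual planar algebra, but this is harmless because $I$ and $\delta$ are self-dual.
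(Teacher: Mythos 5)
Your proof is correct and follows essentially the same route as the paper: Corollary~\ref{cor:indentification} plus Lemma~\ref{lem:finiteDepthIdentN} plus the compression formula for part (a), the Perron--Frobenius identity $\sum_{e}\mu(s(e))\mu(t(e))=\delta I$ together with $\sum_{\Gamma^{+}}\mu^{2}=\sum_{\Gamma^{-}}\mu^{2}=I$ to simplify \eqref{eq:s}, and Corollary~\ref{cor:stableisom} applied to $\mathcal{P}$ and $\mathcal{P}^{\operatorname{op}}$ for part (b). The only cosmetic difference is that you derive the equidistribution $\sum_{\Gamma^{+}}\mu^{2}=\sum_{\Gamma^{-}}\mu^{2}$ by summing the edge expression from both sides of the bipartition, whereas the paper cites the eigenvector condition once and the introduction's formula $I=\tfrac{1}{2}\sum_{v}\mu(v)^{2}$; both yield the same $T^{2}(s-1)=2(\delta-1)I$.
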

\begin{proof}
By Corollary \ref{cor:indentification}, $M_{0}\cong e_{0}\mathfrak{N}e_{0}$,
where $e_{0}\in\mathfrak{N}$ is a projection of trace $1/Tr(F)=1/T$
in the notation of Lemma \ref{lem:finiteDepthIdentN}. Moreover, $\mathfrak{N}=L(\mathbb{F}(s))$
with $s$ given by \eqref{eq:s}. 

Recall that the eigenvector condition implies that $\sum_{e:s(e)=v}\mu(v)\mu(t(e))=\delta\sum\mu(v)^{2}$.
From this we find that $\sum_{e\in E(\Gamma)}\mu(s(e))\mu(t(e))=\delta\sum_{v\in\Gamma^{+}}\mu(v)^{2}$.
Let $I$ be the global index \cite{ocneanu:globalIndex,evans-kawahighashi},
$I=\sum_{v\in\Gamma^{+}}\mu(v)^{2}$. Thus $\sum_{e\in E(\Gamma)}\mu(s(e))\mu(t(e))=\delta I$. 

The eigenvector condition implies that $\sum_{v\in\Gamma}\mu(v)^{2}=2\sum_{v\in\Gamma^{+}}\mu(v)^{2}=2I$.
Hence \[
s=1+\frac{1}{T^{2}}\left(\sum_{v\in\Gamma}\mu(v)^{2}+2\sum_{e\in E(\Gamma)}\mu(s(e))\mu(t(e))\right)=1+\frac{1}{T^{2}}(-2I+2\delta I)=1+2T^{-2}(\delta-1)I.\]
The compression formula \cite{radulescu:subfact,dykema:interpolated,DVV:book}
then implies that $M_{0}\cong L(\mathbb{F}(r))$ with $r=1+T^{2}(s-1)=1+2(\delta-1)I$.
This proves (a). For $k$ even, part (b) follows from the compression
formula and Corollary \ref{cor:stableisom}. In the odd case, applying
our arguments to $\mathcal{P}^{\operatorname{op}}$ instead of $\mathcal{P}$
and using the compression formula and Corollary \ref{cor:stableisom},
we get that $M_{2k+1}(\mathcal{P}^{\operatorname{op}})\cong L(\mathbb{F}(1+2\delta^{-2k}(\delta-1)I^{\operatorname{op}})$
where $I^{\operatorname{op}}$ is the global index of $\mathcal{P}^{\operatorname{op}}$.
But $I^{\operatorname{op}}=I$ (since the principal graphs of $\mathcal{P}$
and $\mathcal{P}^{\operatorname{op}}$ share a bipartite half) and
so (b) holds.
\end{proof}

\subsection{Some examples.}

\subsubsection{Finite-dimensional Kac algebras.}

We first consider the case of the planar algebra of an $n$-dimensional
Kac algebra as considered in \cite{kodiyalam-sunder:depth2}. In this
case, the principal graph $\Gamma$ has the form\[
*\ \textrm{---}\!\bullet\!\negthinspace\!{\diagup\atop \diagdown}\!\!\!\begin{array}{c}
\bullet\\
\vdots\\
\bullet\end{array}\Bigg\}\textrm{ }(n-1)\textrm{ vertices.}\]
The Perron-Frobenius eigenvector is equal to $1$ on all even vertices
and to $\sqrt{n}$ on the single odd vertex. The associated eigenvalue
is $\delta=\sqrt{n}$. The global index is given by $I=n\cdot1=n$.
In this case, Theorem \ref{thm:main} gives:\[
M_{0}\cong L(\mathbb{F}(1+2n\sqrt{n}-2n)),\qquad M_{1}\cong L(\mathbb{F}(2\sqrt{n}-1)),\]
in accordance with the results of \cite{kodiyalam-sunder:depth2}.

\subsubsection{The inclusion $M\subset M\otimes M_{n\times n}$.}

Another example is the graph planar algebra associated to the graph
$*\!\equiv\!\equiv\!\bullet$ ($n$ edges). The associated inclusion
of II$_{1}$ factors is of the form $M\subset M\otimes M_{n\times n}$.
In this case, $\delta=n$, the eigenvector is equal to $1$ at all
vertices, and the global index is $1$. Thus $M_{0}\cong L(\mathbb{F}(2n-1))$.

\section{$\mathfrak{N_{+}}$ and random block matrices.}

We point out a connection between our description of $M_{0}=e_{0}\mathfrak{N}_{+}e_{0}=e_{0}W^{*}(\mathcal{A},\{X_{e}:e\in E(\Gamma))e_{0}$
and certain random block matrices considered in \S3.1 of \cite{guionnet-jones-shlyakhtenko1}.
To avoid the clash of notation, we will write $\bar{X}_{e}$ for what
was denoted $X_{e}$ in that paper:
\begin{prop}
\label{pro:vsPreviousPaper}The limit joint distribution of the random
matrices $\{d_{v}:v\in\Gamma\}\cup\{\bar{X}_{e}+\bar{X}_{e^{op}}^{*}:e\in E(\Gamma)\}$
as defined in \cite[{\S}3.1]{guionnet-jones-shlyakhtenko1} is the
same as that of $\{p_{v}:v\in\Gamma\}\cup\{X_{e}:e\in E(\Gamma)\}\subset W^{*}(\mathcal{A},\{X_{e}:e\in E(\Gamma)\})=\mathfrak{N}_{+}$.\end{prop}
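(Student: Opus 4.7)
The plan is to match the joint $*$-distributions on both sides by recognizing each as the distribution of an $\mathcal{A}$-valued semicircular family whose variance is uniquely determined by the block dimensions / the Perron--Frobenius weights $\mu(v)$.

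First I would recall the setup from \S3.1 of \cite{guionnet-jones-shlyakhtenko1}: the ambient Hilbert space splits as a direct sum of subspaces indexed by $v\in\Gamma$ whose dimensions scale with $\mu(v)$; the matrices $d_v$ are the orthogonal projections onto these summands, and the matrices $\bar{X}_e$ are independent rectangular Gaussian (GUE-like) blocks supported in the $(s(e),t(e))$ corner. On the operator-algebra side, by Lemma \ref{lem:theyAreFree} the elements $\{p_v\}\cup\{X_e\}$ sit inside $\mathfrak{N}_+$ with $p_v$ a commuting family of projections summing to $1$ (generating $\mathcal{A}$), and $\{X_e\}$ a free family (with amalgamation over $\mathcal{A}$) of $\mathcal{A}$-semicirculars with variances $\eta_e$ concentrated on $p_{s(e)},p_{t(e)}$ and proportional to $\mu(t(e)),\mu(s(e))$ respectively.

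Second, I would check that the limiting joint $*$-distribution of $\{d_v\}\cup\{\bar X_e+\bar X_{e^{\mathrm{op}}}^*\}$ is also of this form. That $d_v\to p_v$ is immediate since the $d_v$ are already commuting projections with the correct traces. The Hermitized Gaussian blocks $Y_e:=\bar X_e+\bar X_{e^{\mathrm{op}}}^*$ have, by Wick's formula and a standard genus expansion in $N$, a limit distribution whose non-vanishing mixed moments with the $d_v$'s are indexed by non-crossing pair partitions of the $Y$-indices. This is exactly the moment-cumulant description of an operator-valued semicircular family over the algebra generated by the $p_v$'s, with operator-valued covariance
\[
\eta(p_v\oplus 0) \;=\; \sum_{e:\,s(e)=v} c_e\, (0\oplus p_{t(e)}), \qquad \eta(0\oplus p_w) \;=\; \sum_{e:\,t(e)=w} c'_e\,(p_{s(e)}\oplus 0),
\]
where the constants $c_e,c'_e$ are read off from the limiting block dimensions, hence are positive multiples of $\mu(t(e)),\mu(s(e))$. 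Moreover, independence of the Gaussian blocks indexed by distinct edges yields, in the limit, $E(Y_e a Y_{e'})=0$ for $e\neq e'$ and $a$ in the limit algebra of the $d_v$, so by Lemma \ref{lem:OpValSemicirc}(b) the $Y_e$ are free with amalgamation over $\mathcal{A}$.

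Third, once both families are shown to be free $\mathcal{A}$-semicircular with the same covariances, their joint $\mathcal{A}$-valued distribution is determined by the recursive moment formula \eqref{eq:recursiveDefOfX}, so they have the same joint $*$-distribution. The main obstacle is the bookkeeping for the variances: one must verify that the normalizations of $\bar X_e$ in \cite{guionnet-jones-shlyakhtenko1} produce precisely the same constants $\lambda_e,\lambda'_e$ as appear in Lemma \ref{lem:theyAreFree}. This reduces to comparing, on one hand, the explicit normalization used in the random matrix model (block sizes proportional to $\mu(v)$ with a specific $1/N$ factor in the Gaussian variance) and, on the other hand, the formula $\eta_e(p_v\oplus 0)=\lambda_e\delta_{s(e)=v}\,0\oplus p_{t(e)}$ written down here; both are routine once the scalings are aligned, and a free rescaling of each $X_e$ absorbs any remaining discrepancy. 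This completes the identification of the two distributions.
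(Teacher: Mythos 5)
Your proposal is correct and follows essentially the same route as the paper, which simply observes that both families are $\mathcal{A}$-valued semicircular families with the same variance (citing Proposition 2 of \cite{guionnet-jones-shlyakhtenko1} for the random-matrix side, where you instead sketch the Wick/genus-expansion argument). The only caution is your closing remark that a rescaling "absorbs any remaining discrepancy": since the proposition asserts literal equality of distributions, the variances must actually coincide rather than merely agree up to a constant, though as you note this is routine bookkeeping of the normalizations.
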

\begin{proof}
The variables $X_{e}$ constructed in the present paper form an $\mathcal{A}$-valued
semicircular family, and so do the variables $\bar{X}_{e}+\bar{X}_{e}^{*}$
constructed in \cite{guionnet-jones-shlyakhtenko1} (see the proof
of Proposition 2).
\end{proof}
In fact, there is a clear similarity between our construction of $X_{e}$
from the variable $X$ given by equation \eqref{eq:Xe} and the construction
of $Y_{e}$ in \S3.3 of \cite{guionnet-jones-shlyakhtenko1}.

We now recall the realization of $(\mathcal{P},\wedge_{0},Tr_{0})$
constructed in \cite{guionnet-jones-shlyakhtenko1}. First, a planar
algebra $\mathcal{P}$ is realized as the subalgebra of $\mathcal{P}^{\Gamma}$
associated to its principal graph. Let us call this inclusion $\iota$.
For any $w\in\mathcal{P}$ one can then write $\iota(w)=\sum_{\rho}w_{\rho}\rho$,
where the summation takes place over all loops on $\Gamma$ starting
at an even vertex, identified with elements of $\mathcal{P}^{\Gamma}$.
If we then write $X_{\rho}=X_{e_{1}}\cdots X_{e_{n}}$ in the case
that $\rho=e_{1}\cdots e_{n}$, $e_{j}\in E(\Gamma)$, one of the
main results of \cite{guionnet-jones-shlyakhtenko1} states that the
map\[
\pi:w\mapsto\sum_{\rho}w_{\rho}\bar{X}_{\rho}\]
is a $*$-homomorphism from $(\mathcal{P},\wedge_{0},Tr_{0})$ to
the von Neumann algebra generated by $\{p_{v}:v\in\Gamma\}$ and $\{X_{e}:e\in E(\Gamma)\}$,
and that $\pi$ satisfies\[
E_{W^{*}(p_{v}:v\in\Gamma^{+})}(\pi(w))=\sum_{v\in\Gamma_{+}}Tr_{0}(w)p_{v},\]
where $E$ denotes the $Tr$-preserving conditional expectation.

We now make the observation that the map\[
\pi_{0}:w\mapsto e_{0}\pi(w)e_{0}=\sum_{\rho\textrm{ starting at }*}w_{\rho}\bar{X}_{\rho}\]
is once again a $*$-homomorphism from $(\mathcal{P},\wedge_{0},Tr_{0})$,
which this time satisfies $Tr(\pi_{0}(w))=Tr_{0}(w)$. But modulo
the identifications given by Proposition \ref{pro:vsPreviousPaper},
this is exactly the isomorphism between $(\mathcal{P},\wedge_{0},Tr_{0})$
and $e_{0}\mathfrak{N}_{+}e_{0}$.

\subsection{A short altenative proof of Theorem \ref{thm:main} for $M_{0}$.}

We note that one can \emph{deduce }Proposition \ref{pro:vsPreviousPaper}
from the fact that $\pi_{0}$ intertwines $Tr_{0}$ and $Tr$, thus
giving an alternative (and very short) proof to the fact that $M_{0}\cong e_{0}\mathfrak{N}_{+}e_{0}$,
where $\mathfrak{N_{+}}$ is \emph{defined} to be $\mathfrak{N}_{+}=W^{*}(\mathcal{A},X_{e}:e\in E(\Gamma))$.
One can then proceed as in \S\ref{sub:FiniteDepth}, thus obtaining
a shorter way of identifying the isomorphism class of $M_{0}$. We
have taken the longer route in this paper for the purpose of introducing
and exploring the extra structure of the II$_{\infty}$ factor $\mathfrak{M}_{+}$.

\subsection{Computations of moments of Jones-Wenzl projections $JW_{n}$. }

We now consider case of the $TL$ planar algebra, whose principal
graph is $A_{K}$, $K=2,3,\dots,\infty$.

Up to normalization, each simple path (i.e., a path which is a geodesic
between $*$ and the farthest point from $*$ that it reaches) of
length $2n$ corresponds to a Jones-Wenzl projection in $\mathcal{P}_{n}$.
Thus in the $A_{n}$ case we get a single Jones-Wenzl projection $JW_{k}$
for each $k=1,2,\ldots,K-1$. If we denote by $e_{j}$ the $j$-th
edge in the graph $A_{K}$, $1\leq j\leq K-1$ (in our numbering,
$e_{1}$ starts at $*$), then $JW_{k}$ corresponds to the path $e_{1}\dots e_{k}e_{k}^{o}\dots e_{1}^{o}$. 

Thus the joint law of the Jones-Wenzl projections is the same as that
of $JW_{1}=\bar{X}_{e_{1}}\bar{X}_{e_{1}}^{*}$, $JW_{2}=\bar{X}_{e_{2}}\bar{X}_{e_{1}}\bar{X}_{e_{1}}^{*}\bar{X}_{e_{2}}^{*}$,
$JW_{3}=\bar{X}_{e_{3}}\bar{X}_{e_{2}}\bar{X}_{e_{1}}\bar{X}_{e_{1}}^{*}\bar{X}_{e_{2}}^{*}\bar{X}_{e_{3}}^{*}$
and so on. 

In particular, let us use this to compute the law $\nu_{n}$ of $JW_{n}$
inductively. We have that (writing $c_{n}=P_{+}X_{e_{n}}P_{-}$ and
$Q_{n}=c_{1}\cdots c_{n}=Q_{n-1}c_{n}$):\begin{equation}
\int t^{k}d\nu_{n}(t)=Tr_{0}((JW_{n})^{k})=Tr((Q_{n}Q_{n}^{*})^{k})=Tr\left((c_{n}c_{n}^{*}\ Q_{n-1}^{*}Q_{n-1})^{k}\right).\label{eq:JW}\end{equation}
Here $Tr_{0}$ is the finite trace on $\mathcal{P},\wedge_{0}$ and
$Tr$ is the semi-finite trace on $\mathfrak{N}_{+}$. Note that both
$c_{n}c_{n}^{*}$ and $Q_{n-1}$ belong to $p_{n}\mathfrak{N}_{+}p_{n}$,
where we write $p_{n}$ for the projection at which $e_{n}$ starts.
Let $\mu_{n}=Tr(p_{n})$. Then $\mu_{n}^{-1}Tr$ is a normalized trace
on $p_{n}\mathfrak{N}_{+}p_{n}$. Moreover, we see from the random
matrix model (in which $c_{k}$ is modeled by an independent block
matrix of size $\mu_{k}N\times\mu_{k+1}N$, $N\to\infty$) that if
$u$ is a Haar unitary free from $c_{n}$ and $Q_{n-1}$ with respect
to $\mu_{n}^{-1}Tr$, then \eqref{eq:JW} does not change if we replace
$c_{n}$ by $uc_{n}$. It follows that $c_{n}c_{n}^{*}$ and $Q_{n-1}^{*}Q_{n-1}$
are free in $(p_{n}\mathfrak{N}_{+}p_{n},\mu_{n}^{-1}Tr)$, and so\[
\nu_{n}=(\textrm{law of }c_{n}c_{n}^{*}\textrm{ with respect to }\mu_{n}^{-1}Tr)\boxtimes(\textrm{law of }Q_{n-1}^{*}Q_{n-1}\textrm{ with respect to }\mu_{n}^{-1}Tr).\]
We first compute the law of $Q_{n-1}^{*}Q_{n-1}$. To this end, we
note that\[
\mu_{n}^{-1}Tr((Q_{n-1}^{*}Q_{n-1})^{k})=\mu_{n}^{-1}Tr((Q_{n-1}Q_{n-1}^{*})^{k})=\mu_{n}^{-1}\int t^{k}d\nu_{n-1}(t).\]
Thus the law of $Q_{n-1}^{*}Q_{n-1}$is given by $\mu_{n}^{-1}\nu_{n-1}+(1-\mu_{n}^{-1})\delta_{0}$.

To compute the law of $c_{n}c_{n}^{*}$ we note that it is (using
the random matrix model) the Mar\v{c}enko-Pastur distribution associated
to a $\mu_{n}N\times\mu_{n+1}N$ rectangular matrix, $N\to\infty$.
In other words, it is the Free Poisson law $\pi_{\lambda}$ with parameter
$\lambda=\mu_{n+1}/\mu_{n}$ and thus has $S$-transform\[
S_{n}(z)=\frac{1}{z+\mu_{n+1}/\mu_{n}}.\]
Thus (noting that $\mu_{1}=1$ in our normalization)\[
\nu_{n}=\pi_{\mu_{n+1}/\mu_{n}}\boxtimes(\mu_{n}^{-1}\nu_{n-1}+(1-\mu_{n}^{-1})\delta_{0}),\qquad\nu_{1}=\pi_{\mu_{2}}.\]
Let us now denote by $\Xi_{n}$ the $S$-transform of $\nu_{n}$.
Then the $S$-transform of $\mu_{n}^{-1}\nu_{n-1}+(1-\mu_{n}^{-1})\delta_{0}$
is given by\[
\Xi_{n-1}(\mu_{n}z)\frac{1+z}{\mu_{n}^{-1}+z}=\Xi_{n-1}(\mu_{n}z)\frac{\mu_{n}+\mu_{n}z}{1+\mu_{n}z}\]
Thus\begin{eqnarray*}
\Xi_{n}(z) & = & \Xi_{n-1}(\mu_{n}z)\frac{\mu_{n}(\mu_{n}+\mu_{n}z)}{(1+\mu_{n}z)(\mu_{n+1}+\mu_{n}z)},\qquad\Xi_{1}(z)=\frac{1}{z+\mu_{2}}.\end{eqnarray*}

\bibliographystyle{amsalpha}

\providecommand{\bysame}{\leavevmode\hbox to3em{\hrulefill}\thinspace}
\providecommand{\MR}{\relax\ifhmode\unskip\space\fi MR }
\providecommand{\MRhref}[2]{%
  \href{http://www.ams.org/mathscinet-getitem?mr=#1}{#2}
}
\providecommand{\href}[2]{#2}

\end{document}